\DeclareMathOperator{\SL}{SL}
\DeclareMathOperator{\N}{\mathbb{N}}
\DeclareMathOperator{\Z}{\mathbb{Z}}
\DeclareMathOperator{\C}{\mathbb{C}}
\DeclareMathOperator{\Q}{\mathbb{Q}}
\renewcommand{\H}{\mathbb{H}}
\DeclareMathOperator{\sgn}{sgn}
\DeclareMathOperator{\e}{\mathfrak{e}}
\DeclareMathOperator{\Mp}{Mp}
\DeclareMathOperator{\sig}{sig}
\DeclareMathOperator{\Gal}{Gal}
	\newtheorem{Satz}{Satz}[section]
	\newtheorem{Theorem}[Satz]{Theorem}
	\newtheorem{Lemma}[Satz]{Lemma}
	\newtheorem{Proposition}[Satz]{Proposition} 
	\newtheorem{Corollary}[Satz]{Corollary}
	\theoremstyle{definition} 
	\newtheorem{Definition}[Satz]{Definition}
	\newtheorem{Remark}[Satz]{Remark}
\begin{document} 

\title{Eisenstein series for the Weil representation}
\author{Markus Schwagenscheidt}
\date{}
\maketitle

	\begin{abstract}
		We compute the Fourier expansion of vector valued Eisenstein series for the Weil representation associated to an even lattice. To this end, we define certain twists by Dirichlet characters of the usual Eisenstein series associated to isotropic elements in the discriminant form of the underlying lattice. These twisted functions still form a generating system for the space of Eisenstein series but have better multiplicative properties than the individual Eisenstein series. We adapt a method of Bruinier and Kuss to obtain algebraic formulas for the Fourier coefficients of the twisted Eisenstein series in terms of special values of Dirichlet $L$-functions and representation numbers modulo prime powers of the underlying lattice. In particular, we obtain that the Fourier coefficients of the individual Eisenstein series are rational numbers. Additionally, we show that the twisted Eisenstein series are eigenforms of the Hecke operators on vector valued modular forms introduced by Bruinier and Stein.
	\end{abstract}

	\section{Introduction and statements of the main results}
	
	Eisenstein series for the Weil representation associated to an even lattice have numerous applications in number theory, for example in the theory of Borcherds products \cite{bruinierkuss, bruinierhabil, bruinierconverse}, in the classification of automorphic products of singular weight \cite{scheithauerclassification, dittmannhagemeierschwagenscheidt, scheithauerautomorphicproducts}, and in the study of automorphic Green's functions of Heegner divisors \cite{bruinierkuehn, bruiniermoeller}. In each of these examples, it is crucial to have explicit formulas for the Fourier coefficients of the Eisenstein series. Bruinier and Kuss \cite{bruinierkuss}, Scheithauer \cite{scheithauerclassification} and Kudla and Yang \cite{kudlayang} computed useful formulas for the Fourier expansion of the Eisenstein series corresponding to the zero element in the discriminant group of the underlying lattice. However, there is an Eisenstein series corresponding to each isotropic element in the discriminant group, whose coefficients have not yet been computed satisfactory. The aim of the present work is to close this gap. Let us describe our results in more detail.
	
	\subsection{Twisted Eisenstein series} Let $L$ be an even lattice of signature $(b^{+},b^{-})$ and level $N$, let $L'$ denote its dual lattice, and let $A = L'/L$ be the corresponding discriminant group. The quadratic form on $L$ and the induced finite quadratic form on $A$ will both be denoted by $Q$, and we write $(\cdot,\cdot)$ for the corresponding bilinear form. Let $\C[A]$ be the group algebra of $A$ with basis symbols $\e_{\gamma}$ for $\gamma \in A$, and let $\Mp_{2}(\Z)$ be the metaplectic double cover of $\SL_{2}(\Z)$, realized as the set of pairs $(M,\phi)$ with $M = \left(\begin{smallmatrix}a & b \\ c & d \end{smallmatrix} \right) \in \SL_{2}(\Z)$ and $\phi: \H \to \C$ holomorphic with $\phi(\tau)^{2} = c\tau + d$. The Weil representation $\rho_{A}$ associated to $A$ is a unitary representation of the metaplectic group $\Mp_{2}(\Z)$ on the group algebra $\C[A]$ (see \cite{borcherds}, Section 4). We denote the dual Weil representation by $\rho_{A}^{*}$.

		Following \cite{bruinierkuss}, for $\beta \in A$ with $Q(\beta) = 0 \pmod \Z$ and $k \in \frac{1}{2}\Z$ with $k \geq \frac{5}{2}$ we define the $\C[A]$-valued Eisenstein series
		\begin{align*}
		E_{A,\beta}(\tau) = \frac{1}{2}\sum_{(M,\phi) \in \tilde{\Gamma}_{\infty} \backslash \Mp_{2}(\Z)}\phi(\tau)^{-2k}\rho_{A}^{*}(M,\phi)^{-1}\e_{\beta},
		\end{align*}
		where $\tilde{\Gamma}_{\infty}$ is the subgroup of $\Mp_{2}(\Z)$ generated by $T = \left(\left(\begin{smallmatrix}1 & 1 \\ 0 & 1\end{smallmatrix} \right),1 \right)$. We remarkt that, due to their applications in the theory of Borcherds products, it is customary to study Eisenstein series for the dual Weil representation $\rho_{A}^{*}$ rather than for $\rho_{A}$, and we stick to this convention. The Eisenstein series converges normally and defines a holomorphic modular form of weight $k$ for $\rho_{A}^{*}$. We suppress the weight in the notation since it does not change throughout this work, and we assume that 
		\[
		\kappa = k - \frac{b^{-}}{2}+\frac{b^{+}}{2}
		\]
		is an integer since otherwise there are no non-trivial modular forms of weight $k$ for $\rho_{A}^{*}$. This means that $k$ is integral if the rank of $L$ is even and $k$ is half-integral if the rank of $L$ is odd. For $\beta = 0$ the Fourier expansion of the Eisenstein series $E_{A,0}$ has been computed in \cite{bruinierkuss} in terms of Dirichlet $L$-functions and representation numbers modulo prime powers of the lattice $L$. The starting point of the present note was the attempt to compute the Fourier expansion of $E_{A,\beta}$ for $\beta \neq 0$. Unfortunately, it turns out that the techniques of \cite{bruinierkuss} can not immediately be applied. Therefore we consider the following twisted averages of the Eisenstein series.
		
		\begin{Definition}
		Let $\beta \in A$ with $Q(\beta) = 0 \pmod \Z$, and let $N_{\beta}$ be the order of $\beta$ in $A$. For a Dirichlet character $\chi$ modulo $N_{\beta}$ we define
		\begin{align*}
		E_{A,\beta,\chi} = \sum_{\nu (N_{\beta})^{*}}\chi(\nu)E_{A,\nu\beta}.
		\end{align*}
		\end{Definition}
		
		\begin{Remark}
			\begin{enumerate}
				\item Note that $E_{A,-\beta} = (-1)^{\kappa}E_{A,\beta}$, hence $E_{A,\beta,\chi}$ vanishes identically unless $\chi(-1) = (-1)^{\kappa}$. 
				\item By summing over all Dirichlet characters modulo $N_{\beta}$ we can recover $E_{A,\beta}$. However, we will see below that the Fourier coefficients of the functions $E_{A,\beta,\chi}$ have a better multiplicative structure than the coefficients of the individual Eisenstein series $E_{A,\beta}$, which makes them easier to compute. 
				\item We were led to the twisted functions $E_{A,\beta,\chi}$ by the remarks at the end of Section~I.2 in \cite{eichlerzagier}, where similar twisted Jacobi Eisenstein series were considered. Analogous twisted Jacobi Eisenstein series of lattice index have been defined in \cite{ajouz}, but their Fourier expansion was not computed there.
			\end{enumerate}
		\end{Remark}

		\subsection{Reduction to primitive characters} We first reduce the computation of the Fourier expansion of $E_{A,\beta,\chi}$ to the case of primitive Dirichlet characters by writing $E_{A,\beta,\chi}$ as a linear combination of certain liftings of twisted Eisenstein series $E_{B_{d},\beta_{d},\psi}$ for certain finite quadratic modules $B_{d}$ and the primitive Dirichlet character $\psi$ corresponding to $\chi$. The aforementioned lifting map has been studied in \cite{scheithauermodularforms}, Section 4, and \cite{bruinierconverse}, Section 3, and is defined as follows. Let $H$ be an isotropic subgroup of $A$, i.e., $Q(\gamma) = 0 \pmod \Z$ for all $\gamma \in H$, and let $H^{\perp}$ be its orthogonal complement in $A$. Then $B = H^{\perp}/H$ is a finite quadratic module with the induced finite quadratic form. If $f = \sum_{\gamma \in B}f^{\gamma}\e_{\gamma}$ is a modular form for $\rho_{B}^{*}$, then 
	\[
	f\uparrow_{B}^{A} = \sum_{\gamma \in H^{\perp}}f^{\gamma + H}\e_{\gamma}
	\]
	is a modular form for $\rho_{A}^{*}$ of the same weight. The lifting maps cusp forms to cusp forms and preserves orthogonality to cusp forms with respect to the Petersson inner product. Of course, the Fourier expansion of $f\uparrow_{B}^{A}$ can immediately be read off from the expansion of $f$. Our first result is the following.

		\begin{Proposition}\label{proposition oldforms}
			Let $\chi$ be a Dirichlet character mod $N_{\beta}$ and let $\psi$ be the corresponding primitive Dirichlet character of modulus $N_{\psi}$. Write $N_{\beta} = N_{0}N_{0}'$ with $N_{0} = \prod_{p \mid N_{\psi}}p^{\nu_{p}(N_{\beta})}$. Then we have the formula
			\begin{align*}
			E_{A,\beta,\chi}= \psi(N_{0}')\sum_{d \mid N_{0}'}\mu(d)E_{B_{d},N_{0}'\beta,\psi}\uparrow_{B_{d}}^{A},
			\end{align*}
			where $B_{d} = H_{d}^{\perp}/H_{d}$ with the isotropic subgroup $H_{d}$ generated by $dN_{\psi}\beta$. Further, $N_{0}'\beta$ has order $N_{\psi}$ in $B_{d}$, and $B_{d} \cong L_{d}'/L_{d}$ for the even lattice $L_{d}$ generated by $L$ and $dN_{\psi}\beta$.
		\end{Proposition}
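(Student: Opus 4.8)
The plan is to recognize both sides of the asserted identity as elements of the orthogonal complement $\mathrm{Eis}_k(\rho_A^*)$ of the cusp forms inside the space of holomorphic modular forms of weight $k$ for $\rho_A^*$, and to reduce the claim to an equality of constant terms. Since $k>2$, a holomorphic modular form of weight $k$ for $\rho_A^*$ with vanishing zeroth Fourier coefficient is a cusp form (there being only one cusp), and a cusp form orthogonal to the cusp forms is zero, so the map sending $f\in\mathrm{Eis}_k(\rho_A^*)$ to its zeroth Fourier coefficient $c_f(0)\in\C[A]$ is injective. First I would dispose of the case $\chi(-1)\neq(-1)^\kappa$: then also $\psi(-1)\neq(-1)^\kappa$, so each $E_{B_d,N_0'\beta,\psi}$ vanishes and both sides are zero by the remark preceding the proposition. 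In the remaining case the left-hand side lies in $\mathrm{Eis}_k(\rho_A^*)$ by construction, and each summand on the right does as well, since $\uparrow_{B_d}^A$ preserves weight, holomorphy and orthogonality to cusp forms.

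Next I would work out the constant term of a lifted Eisenstein series. Since $\uparrow_{B_d}^A$ is $\Mp_2(\Z)$-equivariant, unfolding the defining average gives, for isotropic $\bar\delta\in B_d$ with any lift $\delta\in H_d^\perp\subseteq A$,
\begin{align*}
E_{B_d,\bar\delta}\uparrow_{B_d}^A=\sum_{g\in H_d}E_{A,\delta+g},
\end{align*}
all the elements $\delta+g$ being isotropic. Combined with the standard formula $c_{E_{A,\gamma}}(0)=\e_\gamma+(-1)^\kappa\e_{-\gamma}$ and the parity assumption $\chi(-1)=\psi(-1)=(-1)^\kappa$, this yields
\begin{align*}
c_{E_{A,\beta,\chi}}(0)=2\!\!\sum_{\nu\,(N_\beta)^*}\!\!\chi(\nu)\,\e_{\nu\beta},\qquad c_{E_{B_d,N_0'\beta,\psi}\uparrow_{B_d}^A}(0)=2\!\!\sum_{\nu\,(N_\psi)^*}\!\!\psi(\nu)\sum_{g\in H_d}\e_{\nu N_0'\beta+g},
\end{align*}
so that by injectivity of the constant-term map the proposition reduces to the identity of vectors in $\C[A]$
\begin{align*}
\sum_{\nu\,(N_\beta)^*}\chi(\nu)\,\e_{\nu\beta}=\psi(N_0')\sum_{d\mid N_0'}\mu(d)\sum_{\nu\,(N_\psi)^*}\psi(\nu)\sum_{g\in H_d}\e_{\nu N_0'\beta+g}.
\end{align*}

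I would prove this by comparing, for each $m\in\Z/N_\beta\Z$, the coefficient of $\e_{m\beta}$ (these basis vectors being distinct because $\beta$ has order $N_\beta$). On the left it is $\chi(m)$, understood as $0$ when $\gcd(m,N_\beta)>1$. On the right, $H_d$ is the subgroup of multiples of $dN_\psi$ in $\langle\beta\rangle\cong\Z/N_\beta\Z$, so $\sum_{g\in H_d}\e_{\nu N_0'\beta+g}$ contributes $\e_{m\beta}$ exactly once, precisely for those $\nu$ with $m\equiv\nu N_0'\pmod{dN_\psi}$; since $d$ and $N_\psi$ are coprime, the Chinese remainder theorem splits this into $d\mid m$ (as $d\mid N_0'$) together with $\nu\equiv m(N_0')^{-1}\pmod{N_\psi}$, the latter admitting a solution coprime to $N_\psi$, necessarily unique mod $N_\psi$, exactly when $\gcd(m,N_\psi)=1$. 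Hence the right-hand coefficient equals
\begin{align*}
\psi(N_0')\,\psi(m)\psi(N_0')^{-1}\,\mathbf 1[\gcd(m,N_\psi)=1]\!\!\sum_{d\mid\gcd(N_0',m)}\!\!\mu(d)=\psi(m)\,\mathbf 1[\gcd(m,N_\psi)=1]\,\mathbf 1[\gcd(m,N_0')=1].
\end{align*}
The crucial point is then that, by its definition, $N_0=\prod_{p\mid N_\psi}p^{\nu_p(N_\beta)}$ has precisely the prime divisors of $N_\psi$, so $\mathbf 1[\gcd(m,N_\psi)=1]=\mathbf 1[\gcd(m,N_0)=1]$; as $N_\beta=N_0N_0'$ with $\gcd(N_0,N_0')=1$, the product above simplifies to $\psi(m)\,\mathbf 1[\gcd(m,N_\beta)=1]=\chi(m)$, as required. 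I expect this last combinatorial step, and in particular the use of the defining property of $N_0$, to be the only point requiring care; the equivariance of the lifting and the constant term of $E_{A,\gamma}$ are routine. Finally I would check the two auxiliary assertions: $N_0'\beta$ has order $N_\psi$ in $B_d$ because $jN_0'\beta\in H_d$ iff $dN_\psi\mid jN_0'$ iff $N_\psi\mid j$, using $\gcd(N_0',dN_\psi)=d$; and $B_d\cong L_d'/L_d$ for $L_d=L+\Z dN_\psi\beta$ is the standard description of the discriminant form of an isotropic reduction, $L_d$ being even since $Q(dN_\psi\beta)\in\Z$.
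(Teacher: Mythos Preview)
Your proof is correct and follows essentially the same route as the paper: both arguments hinge on the lifting identity $E_{B_d,\bar\delta}\uparrow_{B_d}^A=\sum_{g\in H_d}E_{A,\delta+g}$ (the paper states this separately as a lemma, also proved by comparing constant terms) together with a Dirichlet-character combinatorics reducing $\chi$ to $\psi$. The only cosmetic difference is bookkeeping: the paper starts from $\sum_{\nu(N_\beta)^*}\chi(\nu)E_{A,\nu\beta}$, splits $\nu$ via the Chinese remainder theorem and the M\"obius formula $\sum_{n(N)^*}f(n)=\sum_{d\mid N}\mu(d)\sum_{m(N/d)}f(dm)$, and transforms the left side into the right; you instead compute the constant term of each side and match the coefficient of $\e_{m\beta}$, which amounts to the same identity read in the dual direction.
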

		
		The proof will be given in Section \ref{section proof proposition oldforms} below. In view of the proposition, we can think of $E_{A,\beta,\chi}$ with non-primitive $\chi$ as an \lq oldform\rq \, (see \cite{bruinierconverse}, Section 3), and assume that $\chi$ is primitive in the computation of the Fourier expansion of $E_{A,\beta,\chi}$.
		
		\subsection{The Fourier expansion of the twisted Eisenstein series} Before we can give our formula for the Fourier expansion of $E_{A,\beta,\chi}$ we need to recall some notation from \cite{bruinierkuss, bruinierkuehn}. Let
		\[
		m = b^{+}+b^{-}
		\]
		be the rank of $L$. For $\gamma \in A, n \in \Z-Q(\gamma)$, and $a \in \N$ we define a representation number modulo $a$ by
		\[
		N_{\gamma,n}(a) = \#\{r \in L/aL: Q(r-\gamma)+n \equiv 0 \pmod a\}.
		\]
		The representation number is multiplicative in $a$, i.e., $N_{\gamma,n}(a_{1}a_{2}) = N_{\gamma,n}(a_{1})N_{\gamma,n}(a_{2})$ if $(a_{1},a_{2}) = 1$. For a prime $p$ we consider the polynomial
			\[
			L_{\gamma,n}^{(p)}(X) = N_{\gamma,n}(p^{w_{p}})X^{w_{p}}+(1-p^{m-1}X)\sum_{\nu=0}^{w_{p}-1}N_{\gamma,n}(p^{\nu})X^{\nu} \in \Z[X]
			\]
			with $w_{p} = 1 + 2\nu_{p}(2N_{\beta}N_{\gamma}n)$, where $N_{\gamma}$ is the order of $\gamma$ in $A$ (note that $w_{p}$ differs slightly from the quantity defined in \cite{bruinierkuss,bruinierkuehn}). We define a discriminant $D$ corresponding to $\gamma$ and $n$ by
		\begin{align*}
		D = \begin{cases}
		(-1)^{m/2}\det(L), & \text{if $m$ is even,}\\
		2(-1)^{(m+1)/2}N_{\gamma}^{2}n\det(L), & \text{if $m$ is odd,}
		\end{cases}
		\end{align*}
		where $\det(L)$ denotes the determinant of a Gram matrix of $L$. Let $D_{0}$ be the fundamental discriminant corresponding to $D$, and let $\chi_{D_{0}}$ be the associated primitive quadratic Dirichlet character. Note that $\sgn(\det(L)) = (-1)^{b^{-}}$ which implies $\sgn(D_{0}) = (-1)^{\kappa+k}$ if $m$ is even (and $k$ is integral) and $\sgn(D_{0}) = (-1)^{\kappa + k + 1/2}$ if $m$ is odd (and $k$ is half-integral). For $\gamma \in A$ we define the greatest common divisor
		\[
		g = (N_{\beta},N_{\beta}(\gamma,\beta)).
		\]
		We split 
		\[
		N_{\beta} = N_{g}N_{g}'
		\]
		where $N_{g} = \prod_{p \mid g}p^{\nu_{p}(N_{\beta})}$, and 
		\[
		\chi = \chi_{N_{g}}\chi_{N_{g}'}
		\]
		with primitive characters mod $N_{g}$ and $N_{g}'$, respectively. Similarly, for a prime $p$ dividing $N_{g}$ we will write 
		\[
		\chi_{N_{g}} = \chi_{p}\chi_{p}'
		\]
		with primitive characters mod $p^{\nu_{p}(N_{g})}$ and $N_{g}/p^{\nu_{p}(N_{g})}$. Further, we let 
		\[ 
		G(\chi) = \sum_{u(N_{\beta})^{*}}\chi(u)e(u/N_{\beta}) \quad \text{and} \quad L(\chi,s) = \sum_{n = 1}^{\infty}\chi(n)n^{-s}
		\] 
		(with $e(x) = e^{2\pi i x}$ and $s \in \C, \Re(s) > 1$) be the Gauss sum and the Dirichlet $L$-function associated to $\chi$.
				
		The main result of this note is the following Fourier expansion of $E_{A,\beta,\chi}$, whose computation can be found in Section~\ref{section proof theorem fourier expansion}.

		\begin{Theorem}\label{theorem fourier expansion}
			Let $\chi$ be a primitive Dirichlet character modulo $N_{\beta}$ with $\chi(-1) = (-1)^{\kappa}$. The twisted Eisenstein series $E_{A,\beta,\chi}$ has the Fourier expansion
			\begin{align*}
			E_{A,\beta,\chi}(\tau) = 2\sum_{n(N_{\beta})^{*}}\chi(n)\e_{n\beta}+ \sum_{\gamma \in A}\sum_{\substack{n \in \Z - Q(\gamma) \\ n> 0}}c_{\beta,\chi}(\gamma,n)e(n\tau)\e_{\gamma}
			\end{align*}
			with Fourier coefficients of the form
			\begin{align*}
			c_{\beta,\chi}(\gamma,n) = \frac{2^{k+1}\pi^{k}n^{k-1}i^{\kappa}}{\sqrt{|A|}\Gamma(k)}\varepsilon_{\beta,\chi}(\gamma)c_{\beta,\chi}^{\text{finite}}(\gamma,n)c_{\beta,\chi}^{\text{main}}(\gamma,n)
			\end{align*}
			for $n > 0$. The constant $\varepsilon_{\beta,\chi}(\gamma)$ equals
			\begin{align*}
			\varepsilon_{\beta,\chi}(\gamma) = \frac{1}{N_{g}}\overline{\chi}_{N_{g}'}(N_{\beta}(\gamma,\beta))G(\chi).
			\end{align*}
			The main part $c_{\beta,\chi}^{\text{main}}(\gamma,n)$ is given by
			\begin{align*}
			\begin{dcases}
			\frac{1}{L(\chi\chi_{D_{0}},k)}\prod_{p \mid 2N_{\gamma}^{2}n\det(L)}\frac{L_{\gamma,n}^{(p)}\left(\chi(p)p^{1-m/2-k}\right)}{1-\chi\chi_{D_{0}}(p)p^{-k}}, & \text{if $m$ is even}, \\
			\frac{L(\chi\chi_{D_{0}},k-1/2)}{L(\chi^{2},2k-1)}\prod_{p \mid 2N_{\gamma}^{2}n\det(L)}\frac{1-\chi\chi_{D_{0}}(p)p^{1/2-k}}{1-\chi^{2}(p)p^{1-2k}}L_{\gamma,n}^{(p)}\left(\chi(p)p^{1-m/2-k}\right), & \text{if $m$ is odd}.
			\end{dcases}
			\end{align*}
			Further, the finite part is the product $c_{\beta,\chi}^{\text{finite}}(\gamma,n) = \prod_{p \mid g}c_{\beta,\chi}^{\text{finite},(p)}(\gamma,n)$ with
			\begin{align*}
			c_{\beta,\chi}^{\text{finite},(p)}(\gamma,n) &= \sum_{\alpha = \nu_{p}(g)}^{w_{p}}\chi_{p}'(p^{\alpha})p^{\alpha(1-m/2-k)}\sum_{\nu \left(p^{\nu_{p}(N_{\beta})}\right)}\!\!\!\!\!\!\sum_{\substack{u\left(p^{\nu_{p}(N_{\beta})}\right)^{*} \\ up^{\alpha} \equiv N_{\beta}(\gamma,\beta) \,\left(p^{\nu_{p}(N_{\beta})}\right)} }\!\!\!\!\!\!\overline{\chi}_{p}(u) \\
			&\quad \times\left(N_{\gamma-\nu \frac{N_{\beta}}{p^{\nu_{p}(N_{\beta})}}\beta,n+\frac{p^{\alpha}\nu u}{p^{\nu_{p}(N_{\beta})}}}(p^{\alpha})-p^{m-1}N_{\gamma-\nu \frac{N_{\beta}}{p^{\nu_{p}(N_{\beta})}}\beta,n+\frac{p^{\alpha}\nu u}{p^{\nu_{p}(N_{\beta})}}}(p^{\alpha-1})\right).
			\end{align*}
		\end{Theorem}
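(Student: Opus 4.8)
By Proposition~\ref{proposition oldforms} we may assume throughout that $\chi$ is primitive modulo $N_{\beta}$, and the plan is to adapt the unfolding argument used by Bruinier and Kuss in \cite{bruinierkuss} for $E_{A,0}$ to the twisted series $E_{A,\beta,\chi}$. First I would describe the cosets $\tilde{\Gamma}_{\infty}\backslash\Mp_{2}(\Z)$: apart from the cosets of $\pm I$, which produce the constant term $2\sum_{n(N_{\beta})^{*}}\chi(n)\e_{n\beta}$ after weighting by $\chi(\nu)$ and summing over $\nu\,(N_{\beta})^{*}$ (here one uses $\chi(-1)=(-1)^{\kappa}$), the remaining cosets are indexed by the bottom rows $(c,d)$ of $\SL_{2}(\Z)$ with $c>0$, $d\in\Z$, $\gcd(c,d)=1$. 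Inserting the explicit formula for the action of $\rho_{A}^{*}$ on the basis vectors (a normalized finite Gauss sum over $A/cA$, see \cite{scheithauerclassification}), the $\e_{\gamma}$-component of $E_{A,\beta}$ becomes a sum over $c>0$ and $d$ of such a Gauss sum times the archimedean factor $(c\tau+d)^{-k}$ (with the appropriate metaplectic root of unity). Writing $d=d_{0}+c\ell$ with $d_{0}$ modulo $c$ and $\ell\in\Z$, the Lipschitz summation formula turns $\sum_{\ell}(c\tau+d_{0}+c\ell)^{-k}$ into $c^{-k}\frac{(-2\pi i)^{k}}{\Gamma(k)}\sum_{n\ge1}n^{k-1}e(nd_{0}/c)\,e(n\tau)$, which, together with the signature-dependent phase of the Weil representation (turning $i^{k}$ into $i^{\kappa}$), is the source of the archimedean constant $\frac{2^{k+1}\pi^{k}n^{k-1}i^{\kappa}}{\sqrt{|A|}\,\Gamma(k)}$. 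The phase $e(nd_{0}/c)$ then combines with the $d_{0}$-dependence of the Gauss sum; completing the square and evaluating the resulting quadratic Gauss sums leaves, for the coefficient of $e(n\tau)\e_{\gamma}$, a Dirichlet series in $c$ whose numerators count solutions of $Q(r-\gamma)+n\equiv0$, that is, are built from the representation numbers $N_{\gamma,n}(c)$. The quadratic character $\chi_{D_{0}}$ of the discriminant $D$ enters precisely at this quadratic Gauss sum evaluation.

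Next I would incorporate the twist. Replacing $\beta$ by $\nu\beta$ and summing against $\chi(\nu)$ over $\nu\,(N_{\beta})^{*}$, the $\nu$-sum couples to the residue $d_{0}$ modulo $c$. Splitting $c$ multiplicatively and decomposing $N_{\beta}=N_{g}N_{g}'$ according to $g=(N_{\beta},N_{\beta}(\gamma,\beta))$ and $\chi=\chi_{N_{g}}\chi_{N_{g}'}$, orthogonality of characters collapses the $\nu$-sum: the contribution of $\chi_{N_{g}'}$ produces the Gauss sum $G(\chi)$ together with $\tfrac{1}{N_{g}}\overline{\chi}_{N_{g}'}(N_{\beta}(\gamma,\beta))$, i.e.\ the constant $\varepsilon_{\beta,\chi}(\gamma)$, while for each prime $p\mid g$ the contribution of $\chi_{p}$ imposes the congruence $up^{\alpha}\equiv N_{\beta}(\gamma,\beta)\,(p^{\nu_{p}(N_{\beta})})$ and leaves exactly the truncated local sum $c_{\beta,\chi}^{\mathrm{finite},(p)}(\gamma,n)$; here the shift by $\beta$ replaces $N_{\gamma,n}(p^{\alpha})$ by the shifted counting functions $N_{\gamma-\nu\beta N_{\beta}/p^{\nu_{p}(N_{\beta})},\,n+\cdots}(p^{\alpha})$ and the truncation $\alpha\le w_{p}$ reflects their stabilization. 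At the remaining primes the $\chi$-twisted Dirichlet series factors as an Euler product whose local factor at $p$ equals $(1-p^{m-1}X)\sum_{\nu\ge0}N_{\gamma,n}(p^{\nu})X^{\nu}$ with $X=\chi(p)p^{1-m/2-k}$.

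Finally I would evaluate these local factors as in \cite{bruinierkuss}. For $p\nmid 2N_{\gamma}^{2}n\det(L)$ the lattice is $p$-adically good, so $N_{\gamma,n}(p^{\nu})=p^{(m-1)(\nu-1)}N_{\gamma,n}(p)$ for $\nu\ge1$ with $N_{\gamma,n}(p)=p^{m-1}-\chi_{D_{0}}(p)p^{m/2-1}$ when $m$ is even, and the local factor collapses to $1-\chi\chi_{D_{0}}(p)p^{-k}$; for $m$ odd the analogous, more involved closed form produces the factor corresponding to $L(\chi\chi_{D_{0}},k-1/2)/L(\chi^{2},2k-1)$. For $p\mid 2N_{\gamma}^{2}n\det(L)$ the stabilization $N_{\gamma,n}(p^{\nu})=p^{(m-1)(\nu-w_{p})}N_{\gamma,n}(p^{w_{p}})$ for $\nu\ge w_{p}$ makes the local factor telescope into the polynomial $L_{\gamma,n}^{(p)}(X)$. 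Re-introducing the good local factors as $1/L(\chi\chi_{D_{0}},k)$ in the even case (respectively $L(\chi\chi_{D_{0}},k-1/2)/L(\chi^{2},2k-1)$ in the odd case) and collecting the bad-prime factors together with the local pieces for $p\mid g$ then yields the asserted product formula for $c_{\beta,\chi}(\gamma,n)$.

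The step I expect to be the main obstacle is the twist bookkeeping in the second paragraph: one has to keep track simultaneously of how $\chi$ factors through the $c$-variable after unfolding, of how the nonzero $\beta$ deforms the representation numbers into the shifted counting functions appearing in $c_{\beta,\chi}^{\mathrm{finite},(p)}(\gamma,n)$, and of how the decomposition $N_{\beta}=N_{g}N_{g}'$ interacts both with the good and bad primes and with the congruence linking $u$, $\alpha$ and $(\gamma,\beta)$. The $p$-adic evaluation of the local densities, by contrast, is essentially the one carried out in \cite{bruinierkuss}, now with an extra Dirichlet character carried along, and should require only routine modifications.
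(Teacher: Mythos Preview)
Your sketch is correct and follows essentially the same route as the paper. The paper organizes the ``twist bookkeeping'' by first passing via Shintani's formula to exponential sums $G_{\gamma,n}(c;\beta,\chi)$, then proving a multiplicativity lemma in $c$ (which simultaneously factors $\chi$ and replaces $\beta$ by suitable multiples), and finally evaluating the local pieces in a separate proposition; the device that collapses the $\nu$-sum is not character orthogonality as such but the Gauss-sum inversion $\chi(\nu)=G(\overline\chi)^{-1}\sum_{u}\overline\chi(u)\,e(u\nu/N_{\beta})$ valid for primitive $\chi$, which lets one extend the sum to all $\nu$ and recognize it as a quadratic Gauss sum vanishing unless the congruence linking $u$, $p^{\alpha}$ and $N_{\beta}(\gamma,\beta)$ holds.
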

		
		\begin{Remark}
			\begin{enumerate}
				\item Up to the appearance of the character $\chi$ in some places, the formula for the main part of $E_{A,\beta,\chi}$ resembles the formula for $E_{A,0}$ given in \cite{bruinierkuss}, Theorem~7.
				\item In \cite{bruinierkuehn} the Fourier expansion of a real-analytic analog $E_{A,\beta}(\tau,s)$, $s \in \C$, of the Eisenstein series $E_{A,\beta}(\tau)$ has been computed for $\beta = 0$, using the same techniques as in \cite{bruinierkuss}. We remark that our results can easily be generalized to real-analytic twisted Eisenstein series $E_{A,\beta,\chi}(\tau,s)$. It could be interesting to study their special value at $s = 0$ for low weights as in \cite{williams}.
		
				\item In her Master's thesis \cite{masterklimmek}, Klimmek computed a different formula for $E_{A,\beta,\chi}$ in the case that the rank of $L$ is even and the level of $L$ is square free. Following an idea of Scheithauer (see \cite{scheithauerclassification}, Section 7), she wrote $E_{A,\beta,\chi}$ as a certain lift of a scalar valued Eisenstein series for a congruence subgroup, whose Fourier coefficients are well known. The formulas obtained in this way involve special values of Dirichlet $L$-functions, twisted divisor sums and elementary invariants of the underlying lattice, but no representation numbers modulo prime powers.
				\item Recently, the Fourier coefficients of Jacobi Eisenstein series of lattice index (and corresponding to the zero element in the discriminant form) have been studied in \cite{woitalla,mocanu}. Using the techniques of the present work it should be possible to compute the expansions of all Jacobi Eisenstein series of lattice index.
			\end{enumerate}
		\end{Remark}
		
		Using the functional equation of the Dirichlet $L$-function and its evaluation at negative integers in terms of Bernoulli polynomials we may infer the following rationality result from Proposition~\ref{proposition oldforms} and Theorem~\ref{theorem fourier expansion}.
		
		\begin{Corollary}\label{corollary}
			For every Dirichlet character $\chi$ modulo $N_{\beta}$ the coefficients of $E_{A,\beta,\chi}$ lie in $\Q(\chi) \subset \Q\left(\zeta_{\varphi(N_{\beta})}\right)$, where $\varphi$ is Euler's totient function and $\zeta_{\varphi(N_{\beta})}$ is a primitive $\varphi(N_{\beta})$-th root of unity. The Galois group of $\Q(\zeta_{\varphi(N_{\beta})})$ acts on the Fourier coefficients of $E_{A,\beta,\chi}$ by 
			\[
			\sigma\big(c_{\beta,\chi}(\gamma,n)\big) = c_{\beta,\sigma\circ\chi}(\gamma,n),
			\]
			for $\sigma \in \Gal\left(\Q\left(\zeta_{\varphi(N_{\beta})}\right)/\Q\right)$.		 In particular, the coefficients of the individual Eisenstein series $E_{A,\beta} = \sum_{\chi}E_{A,\beta,\chi}$ are rational numbers. 
		\end{Corollary}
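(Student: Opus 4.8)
\emph{Plan of proof.} The first step is to reduce to a primitive character. If $\chi$ is not primitive and $\psi$ is the primitive character inducing it, then Proposition~\ref{proposition oldforms} expresses $E_{A,\beta,\chi}$ as $\psi(N_0')$ times a $\Z$-linear combination of the forms $E_{B_d,N_0'\beta,\psi}\uparrow_{B_d}^{A}$, and the lift $\uparrow_{B_d}^{A}$ has Fourier coefficients among those of $E_{B_d,N_0'\beta,\psi}$; moreover $N_0'\beta$ has order $N_\psi$ in $B_d$, so $\psi$ is primitive of modulus equal to the order of $N_0'\beta$. Since $\psi(N_0'),\mu(d)\in\Q(\psi)=\Q(\chi)$, and since for $\sigma\in\Gal(\Q(\zeta_{\varphi(N_\beta)})/\Q)$ we have $\sigma(\psi(N_0'))=(\sigma\circ\psi)(N_0')$, $\sigma$ fixes $\Z$ and commutes with the lift, while $\sigma\circ\psi$ is again primitive of modulus $N_\psi$ and induces $\sigma\circ\chi$, both assertions of the corollary for general $\chi$ follow from the case of primitive $\chi$.

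\emph{Separating the arithmetic from the analytic factors.} Assume $\chi$ primitive and inspect the formula of Theorem~\ref{theorem fourier expansion}. Whether $m$ is even or odd, every exponent of a prime $p$ occurring in the formula (for instance $1-\tfrac m2-k$, $\tfrac12-k$, $1-2k$, $m-1$) is an integer, and $k-\tfrac12$, $2k-1$ are positive integers in the odd case; the representation numbers are integers and the polynomials $L_{\gamma,n}^{(p)}$ have integer coefficients; the values of $\chi$ and of the component characters $\chi_p,\chi_p',\chi_{N_g},\chi_{N_g'}$ lie in $\Q(\chi)$, being cut out of $\chi$ by congruence conditions; $\chi_{D_0}$ is quadratic, so the values of $\chi_{D_0}$, $\chi\chi_{D_0}$ and $\chi^2$ lie in $\Q(\chi)$; and $\Gamma(k)$ is rational if $m$ is even and a rational multiple of $\sqrt\pi$ if $m$ is odd. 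Hence $c_{\beta,\chi}^{\mathrm{finite}}(\gamma,n)$ and the finite Euler products in $c_{\beta,\chi}^{\mathrm{main}}(\gamma,n)$ already lie in $\Q(\chi)$, and the only possibly transcendental or irrational ingredients of $c_{\beta,\chi}(\gamma,n)$ are $\pi^{k}$, $i^{\kappa}$, $\sqrt{|A|}$, the Gauss sum $G(\chi)$, and the special values $L(\chi\chi_{D_0},k)$ if $m$ is even, respectively $L(\chi\chi_{D_0},k-\tfrac12)$ and $L(\chi^2,2k-1)$ if $m$ is odd.

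\emph{Eliminating the analytic factors.} For each of these $L$-values I would pass to the primitive character underlying its argument (which changes the value only by a finite Euler product in $\Q(\chi)$) and apply the functional equation of the Dirichlet $L$-function together with $L(\psi,1-j)=-B_{j,\psi}/j$ and $B_{j,\psi}\in\Q(\psi)$. The parity hypothesis $\chi(-1)=(-1)^{\kappa}$ together with the recorded sign of $D_0$ (which in turn rests on $\sgn(\det L)=(-1)^{b^-}$ and the explicit shape of $D$) force $\chi\chi_{D_0}$ at $s=k$, respectively at $s=k-\tfrac12$, and $\chi^2$ at $s=2k-1$ to have exactly the parity for which these are ``critical-type'' values, so that each equals an explicit rational number times a power of $\pi$ times $G(\cdot)^{\pm1}$ times a power of $i$ times $\sqrt{f}^{\,\mp1}$ times a generalized Bernoulli number. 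Substituting, the powers of $\pi$ cancel against $\pi^{k}/\Gamma(k)$ — in the odd case absorbing also the $\sqrt\pi$ of $\Gamma(k)$, the half-integral power of $n$, and the $\sqrt{2n}$ hidden in $\sqrt{|A|}$; writing $|A|=|\det L|$, using $D=D_0f^{2}$ and Gauss's evaluation $G(\chi_{D_0})=\sqrt{D_0}$ to express $1/\sqrt{|A|}$ through $G(\chi_{D_0})$ and rationals, and then collapsing $G(\chi)$, $G(\chi_{D_0})$ and the Gauss sums produced by the functional equations via multiplicativity of Gauss sums (and $G(\chi_{D_0})^{2}=D_0\in\Q$), the remaining powers of $i$ combine with $i^{\kappa}$ into a rational sign. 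Hence $c_{\beta,\chi}(\gamma,n)\in\Q(\chi)\subset\Q(\zeta_{\varphi(N_\beta)})$. I expect this bookkeeping — making sure that $\pi$, $i$, $\sqrt{|A|}$ and all the Gauss sums really disappear, with the parity computations and the sign of the quadratic Gauss sum as the delicate points, and with the extra $\sqrt\pi$, $\sqrt2$ and factor $N_\gamma$ in the half-integral weight case — to be the main obstacle.

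\emph{Galois action and conclusion.} After the reduction of the third step, $c_{\beta,\chi}(\gamma,n)$ is expressed using only rational numbers, integral representation numbers, values of $\chi$, of its components and of the quadratic character $\chi_{D_0}$, and generalized Bernoulli numbers attached to characters of the form $\chi\chi_{D_0}$ and $\chi^{2}$. Writing $\sigma(\zeta)=\zeta^{t}$ with $t\in(\Z/\varphi(N_\beta))^{\ast}$, applying $\sigma$ replaces each occurrence of $\chi$ by $\chi^{t}=\sigma\circ\chi$ (whose components are the $\sigma$-images of those of $\chi$), fixes the quadratic character values and the rationals, and sends $B_{j,\psi}$ to $B_{j,\sigma\circ\psi}$; comparison with the formula for $c_{\beta,\sigma\circ\chi}(\gamma,n)$ gives $\sigma\big(c_{\beta,\chi}(\gamma,n)\big)=c_{\beta,\sigma\circ\chi}(\gamma,n)$, first for primitive $\chi$ and then, by the Galois-equivariant reduction of the first step, for all $\chi$. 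Finally, since $\chi\mapsto\sigma\circ\chi$ permutes the set of Dirichlet characters modulo $N_\beta$, every $\sigma$ fixes each Fourier coefficient of $E_{A,\beta}=\sum_\chi E_{A,\beta,\chi}$, so these coefficients are rational.
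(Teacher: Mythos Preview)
Your proposal is correct and follows essentially the same route as the paper: reduce to primitive $\chi$ via Proposition~\ref{proposition oldforms}, observe that the finite part and the finite Euler products already lie in $\Q(\chi)$, and then eliminate the remaining transcendental ingredients ($\pi^{k}$, $i^{\kappa}$, $\sqrt{|A|}$, $G(\chi)$, and the $L$-values) using the functional equation of Dirichlet $L$-functions, their evaluation at negative integers in terms of Bernoulli polynomials, and standard Gauss sum identities, with the parity condition $\chi(-1)=(-1)^{\kappa}$ together with the recorded sign of $D_{0}$ ensuring that all square roots and powers of $i$ cancel. The paper's own write-up is terser and singles out the identity $G(\chi^{2})=G(\chi)^{2}/J(\chi,\chi)$ for the odd-rank case, but the strategy and the delicate points you flag (the parity bookkeeping and the extra $\sqrt{\pi}$ and $\sqrt{2n}$ in half-integral weight) are exactly those of the paper.
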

		
		For the proof we refer to Section \ref{section proof corollary}.
		
		\subsection{The action of Hecke operators on twisted Eisenstein series} In order to emphasize the significance of the twisted Eisenstein series $E_{A,\beta,\chi}$ we finally show that they are eigenforms under the Hecke operators on vector valued modular forms introduced by Bruinier and Stein in \cite{bruinierstein}. Let $p$ be a prime which is coprime to the level $N$ of $L$. If the rank $m$ of $L$ is even, we assume that $p \equiv r^{2} \pmod N$ for some $r \in \Z/N\Z$, and consider the Hecke operator 
		\[
		T_{r}(p) = T\left(\begin{pmatrix}p & 0 \\ 0 & 1 \end{pmatrix},r \right)
		\]
		as in \cite{bruinierstein}, Theorem~4.2. If $m$ is odd, we consider the Hecke operator 
		\[
		T(p^{2}) = T\left(\begin{pmatrix}p^{2} & 0 \\ 0 & 1 \end{pmatrix},1,p,1 \right)
		\] as in \cite{bruinierstein}, Theorem~4.10. The Hecke operators map cusp forms to cusp forms and they are self-adjoint with respect to the Petersson inner product. Their action on the Fourier expansion of a vector valued modular form for $\rho_{A}^{*}$ can explicitly be computed, compare \cite{bruinierstein}, Theorem~4.2 and Theorem~4.10, or Section~\ref{section proof proposition hecke} below.

		\begin{Proposition}\label{proposition hecke}
			Let $\chi$ be a (not necessarily primitive) Dirichlet character modulo $N_{\beta}$. Let $p$ be a prime which is coprime to the level of $L$. The Eisenstein series $E_{A,\beta,\chi}$ is an eigenform of $T_{r}(p)$ if $m$ is even or $T(p^{2})$ if $m$ is odd with eigenvalue
			\begin{align*}
			\begin{dcases}
			\chi(r) +  p^{k-1}\overline{\chi}(r), & \text{if $m$ is even,} \\
			\chi(p) + p^{2k-2}\overline{\chi}(p), & \text{if $m$ is odd.}
			\end{dcases}
			\end{align*}
		\end{Proposition}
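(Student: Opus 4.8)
The plan is to bypass the full Fourier expansion and instead exploit the fact that an Eisenstein series is uniquely determined by its constant term. If $\chi(-1)\neq(-1)^{\kappa}$ then $E_{A,\beta,\chi}=0$ by the Remark above and there is nothing to prove, so assume $\chi(-1)=(-1)^{\kappa}$; write $T$ for the Hecke operator in question ($T_{r}(p)$ if $m$ is even, $T(p^{2})$ if $m$ is odd) and let $\lambda$ denote the eigenvalue asserted in the statement. Since $T$ is self-adjoint for the Petersson product and maps cusp forms to cusp forms, it preserves the orthogonal complement of the space of cusp forms in the space of holomorphic modular forms of weight $k$ for $\rho_{A}^{*}$. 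Each Eisenstein series $E_{A,\nu\beta}$ lies in this complement, hence so do $E_{A,\beta,\chi}=\sum_{\nu(N_{\beta})^{*}}\chi(\nu)E_{A,\nu\beta}$ and $E_{A,\beta,\chi}\,|\,T$. Consequently it suffices to show that $E_{A,\beta,\chi}\,|\,T$ and $\lambda E_{A,\beta,\chi}$ have the same constant term: their difference is then a holomorphic modular form with vanishing constant term, hence a cusp form, and being orthogonal to all cusp forms it must vanish.

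First I would determine the constant term of $E_{A,\beta,\chi}$. In the average defining $E_{A,\beta}$ only the four cosets of $\tilde{\Gamma}_{\infty}$ represented by $(I,\pm 1)$ and $(-I,\pm i)$, i.e.\ those with lower left entry $c=0$, contribute to the constant term, since every other term carries a factor $(c\tau+d)^{-k}$ vanishing as $\operatorname{Im}(\tau)\to\infty$; evaluating $\rho_{A}^{*}$ on these four elements gives the constant term $\e_{\beta}+(-1)^{\kappa}\e_{-\beta}$ of $E_{A,\beta}$ (this is also recorded in \cite{bruinierkuss}). Summing against $\chi$, replacing $\nu$ by $-\nu$ in the $\e_{-\nu\beta}$-part, and using $\chi(-1)=(-1)^{\kappa}$, the constant term of $E_{A,\beta,\chi}$ equals $2\sum_{\nu(N_{\beta})^{*}}\chi(\nu)\e_{\nu\beta}$, in agreement with Theorem~\ref{theorem fourier expansion}. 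This computation uses only $\chi(-1)=(-1)^{\kappa}$, so primitivity of $\chi$ plays no role and there is no need to invoke Proposition~\ref{proposition oldforms}.

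It then remains to apply the explicit formulas for the action of $T_{r}(p)$ and $T(p^{2})$ on Fourier expansions from \cite{bruinierstein}, Theorem~4.2 and Theorem~4.10, specialised to the coefficient of $\e_{\gamma}$ at $n=0$. In that specialisation the term proportional to $n$ (which in the odd case carries the quadratic symbol) disappears, and the constant term transforms by $c(\gamma,0)\mapsto c(r\gamma,0)+p^{k-1}c(r^{-1}\gamma,0)$ when $m$ is even — here $r^{-1}$ is an inverse of $r$ modulo the level and $p\equiv r^{2}$ — and by $c(\gamma,0)\mapsto c(p\gamma,0)+p^{2k-2}c(p^{-1}\gamma,0)$ when $m$ is odd. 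Plugging in the constant term just computed and using $\chi(r\nu)=\chi(r)\chi(\nu)$ and $\chi(r^{-1}\nu)=\overline{\chi}(r)\chi(\nu)$ (and likewise with $p$ in place of $r$), each of the two resulting sums re-indexes back to $\sum_{\nu(N_{\beta})^{*}}\chi(\nu)\e_{\nu\beta}$, and the overall factor is $\chi(r)+p^{k-1}\overline{\chi}(r)$ for even $m$ and $\chi(p)+p^{2k-2}\overline{\chi}(p)$ for odd $m$. This equals $\lambda$, which finishes the argument.

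The conceptual steps are routine; the hard part will be transcribing the Bruinier--Stein formulas correctly at $n=0$ — in particular, checking in the even case that the two surviving contributions scale $\gamma$ by $r$ and by $r^{-1}$ with weights $1$ and $p^{k-1}$ in exactly that order (the opposite order would produce the complex-conjugate eigenvalue), and checking in the odd case that the Shimura-type middle term of $T(p^{2})$ truly vanishes at $n=0$ and that the local $p$-factors trivialise because $p$ is coprime to the level. One must also keep track of the $\rho_{A}$ versus $\rho_{A}^{*}$ conventions in \cite{bruinierstein}. As an alternative one could avoid the Fourier expansion altogether and work straight from the double-coset definition of the Hecke operators, combining the Hecke coset sum with the average defining $E_{A,\nu\beta}$ to write $E_{A,\nu\beta}\,|\,T$ as a linear combination of $E_{A,r\nu\beta}$ and $E_{A,r^{-1}\nu\beta}$; the twist by $\chi$ then diagonalises precisely as above.
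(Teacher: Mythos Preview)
Your argument is correct and follows essentially the same route as the paper: both proofs use that the Hecke operators are self-adjoint and preserve the orthogonal complement of the cusp forms, reduce to a comparison of constant terms, and then apply the Bruinier--Stein formulas at $n=0$ (where the middle term in the odd case vanishes) together with a re-indexing $\nu\mapsto r^{\pm1}\nu$ (resp.\ $p^{\pm1}\nu$). The only cosmetic difference is that the paper first records the effect on the individual $E_{A,\beta}$ as a separate lemma ($E_{A,\beta}\,|\,T = E_{A,\beta/r}+p^{k-1}E_{A,r\beta}$ in the even case, and analogously for odd $m$) and then twists by $\chi$, which is exactly the alternative you sketch in your last paragraph.
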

		
		The statement follows quite easily from the results of \cite{bruinierstein}, and will be proved in Section~\ref{section proof proposition hecke}. We remark that the individual Eisenstein series $E_{A,\beta}$ are in general not Hecke eigenforms, compare Lemma~\ref{lemma hecke} below. The action of Hecke operators on similar twisted Jacobi Eisenstein series has been computed in \cite{ajouz}.

		\subsection*{Acknowledgements}
		
		I cordially thank Brandon Williams for enlightening discussions and his numerical calculations to validate the formulas above. I also thank Jan Bruinier, Johannes Buck, Franziska Klimmek, Andreea Mocanu and Sebastian Opitz for helpful discussions. The author was partially supported by DFG grant BR-2163/4-1 and the LOEWE-Schwerpunkt USAG.
		
	\section{Proofs of the main results}	
	
	\subsection{Proof of Proposition \ref{proposition oldforms}}\label{section proof proposition oldforms}
	
	We start with a simple lemma.
	
	\begin{Lemma}\label{lemma lifting}
		Let $H$ be an isotropic subgroup of $A$ and $B = H^{\perp}/H$. Let $\beta \in B$ with $Q(\beta) = 0 \pmod \Z$. Then we have
		\[
		E_{B,\beta}\uparrow_{B}^{A} = \sum_{\gamma \in H}E_{A,\beta+\gamma}.
		\]
	\end{Lemma}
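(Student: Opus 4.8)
The plan is to read off the identity from the fact that the lifting $\uparrow_{B}^{A}$ is a homomorphism of $\Mp_{2}(\Z)$-representations, which is the very property that makes it carry modular forms for $\rho_{B}^{*}$ to modular forms for $\rho_{A}^{*}$. First I would record the lifting as an explicit linear map $\C[B]\to\C[A]$: choosing for every $\mu\in B=H^{\perp}/H$ a representative $\bar{\mu}\in H^{\perp}\subseteq A$, the definition $f\uparrow_{B}^{A}=\sum_{\gamma\in H^{\perp}}f^{\gamma+H}\e_{\gamma}$ is nothing but the linear extension of $\e_{\mu}\mapsto\sum_{\delta\in H}\e_{\bar{\mu}+\delta}$, and this is manifestly independent of the chosen representative. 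The input I would invoke, from \cite{scheithauermodularforms}, Section~4, and \cite{bruinierconverse}, Section~3, is that this map intertwines the dual Weil representations,
\[
\big(\rho_{B}^{*}(M,\phi)\,g\big)\uparrow_{B}^{A}=\rho_{A}^{*}(M,\phi)\big(g\uparrow_{B}^{A}\big)\qquad\text{for all }(M,\phi)\in\Mp_{2}(\Z),\ g\in\C[B].
\]
If one prefers a self-contained argument, this is verified on the generators $T$ and $S=\left(\left(\begin{smallmatrix}0&-1\\1&0\end{smallmatrix}\right),\sqrt{\tau}\right)$ of $\Mp_{2}(\Z)$: for $T$ it holds because the finite quadratic form on $B$ is the one induced from $H^{\perp}$, and for $S$ it is a short Gauss sum computation using that the bilinear form on $B$ is induced from $H^{\perp}$ together with $|A|=|H|^{2}|B|$.

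Next I would apply $\uparrow_{B}^{A}$ to the defining average of $E_{B,\beta}$. Since the series converges normally for $k\geq\tfrac{5}{2}$ and $\uparrow_{B}^{A}$ is linear, I may interchange it with the sum, and the intertwining property applied to $(M,\phi)^{-1}$ gives
\[
E_{B,\beta}\uparrow_{B}^{A}=\frac{1}{2}\sum_{(M,\phi)\in\tilde{\Gamma}_{\infty}\backslash\Mp_{2}(\Z)}\phi(\tau)^{-2k}\,\rho_{A}^{*}(M,\phi)^{-1}\big(\e_{\beta}\uparrow_{B}^{A}\big).
\]
By the explicit description of the lifting, $\e_{\beta}\uparrow_{B}^{A}=\sum_{\gamma\in H}\e_{\bar{\beta}+\gamma}$ for a fixed representative $\bar{\beta}\in H^{\perp}$ of $\beta$.

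Finally I would swap the finite sum over $\gamma\in H$ with the average over $\tilde{\Gamma}_{\infty}\backslash\Mp_{2}(\Z)$ and identify each summand. For $\gamma\in H$ one has $Q(\bar{\beta}+\gamma)=Q(\bar{\beta})+(\bar{\beta},\gamma)+Q(\gamma)\equiv 0\pmod{\Z}$, since $\bar{\beta}\in H^{\perp}$, $Q(\bar{\beta})=Q(\beta)\equiv 0$, and $\gamma$ lies in the isotropic subgroup $H$; hence $E_{A,\bar{\beta}+\gamma}$ is defined and equals $\tfrac{1}{2}\sum_{(M,\phi)}\phi(\tau)^{-2k}\rho_{A}^{*}(M,\phi)^{-1}\e_{\bar{\beta}+\gamma}$. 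Summing over $\gamma\in H$ yields $E_{B,\beta}\uparrow_{B}^{A}=\sum_{\gamma\in H}E_{A,\beta+\gamma}$, where $\beta+\gamma$ is read via the lift $\bar{\beta}$; as $\gamma$ runs over all of $H$ the right-hand side does not depend on this choice. The only step that is not routine bookkeeping is the equivariance of the lifting map, and that fact is already implicit in the part of the excerpt asserting that $\uparrow_{B}^{A}$ preserves modularity.
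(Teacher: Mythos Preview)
Your argument is correct but proceeds along a different line than the paper. The paper does not touch the defining average at all: it invokes the characterization of $E_{B,\beta}$ as the unique weight-$k$ modular form for $\rho_{B}^{*}$ with constant term $\e_{\beta}+(-1)^{\kappa}\e_{-\beta}$ that is orthogonal to cusp forms, recalls that $\uparrow_{B}^{A}$ preserves orthogonality to cusp forms, and then simply checks that both sides have the same constant term $\sum_{\gamma\in\beta+H}(\e_{\gamma}+(-1)^{\kappa}\e_{-\gamma})$. Your route is more hands-on: you use the $\Mp_{2}(\Z)$-equivariance of $\uparrow_{B}^{A}$ to push the lift inside the average and reduce everything to the identity $\e_{\beta}\uparrow_{B}^{A}=\sum_{\gamma\in H}\e_{\bar\beta+\gamma}$ on the level of $\C[B]\to\C[A]$. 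The paper's approach buys brevity and avoids any convergence or sum-interchange bookkeeping, at the cost of importing the uniqueness characterization and the (stated but not proved here) fact that the lift preserves orthogonality to cusp forms. Your approach is self-contained once the intertwining property is in hand and would work equally well in the non-holomorphic setting $E_{A,\beta}(\tau,s)$ mentioned later in the paper, where the ``constant term plus orthogonality'' characterization is less immediate.
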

	
	\begin{proof}
		It is well known that the Eisenstein series $E_{B,\beta}$ is the unique modular form of weight $k$ for $\rho_{B}^{*}$ which has constant term $\e_{\beta} + (-1)^{\kappa}\e_{-\beta}$ and is orthogonal to cusp forms with respect to the Petersson inner product. Since $\uparrow_{B}^{A}$ preserves orthogonality to cusp forms, it suffices to compare the constant terms of the two functions above. It is easy to check that the constant term of both functions is given by $\sum_{\gamma \in \beta+H}(\e_{\gamma}+(-1)^{\kappa}\e_{-\gamma})$.
	\end{proof}
	
		Now we proceed to the proof of Proposition \ref{proposition oldforms}. Let $\psi$ be the primitive character mod $N_{\psi}$ associated to $\chi$. Split $N_{\beta} = N_{0}N_{0}'$, where $N_{0}$ is the exact divisor of $N_{\beta}$ having the same prime divisors as $N_{\psi}$. Note that $(N_{0},N_{0}') = 1$. Thus we can write $n = n_{1}N_{0} + n_{2}N_{0}'$ with $n_{1}$ running mod $(N_{0}')^{*}$ and $n_{2}$ running mod $(N_{0})^{*}$. Since $N_{0}$ has the same prime divisors as $N_{\psi}$, we can further write $n_{2} = a + N_{\psi}b$ with $a$ running mod $(N_{\psi})^{*}$ and $b$ running mod $N_{0}/N_{\psi}$. We obtain
		\[
		E_{A,\beta,\chi} = \psi(N_{0}')\sum_{a(N_{\psi})^{*}}\psi(a)\sum_{b (N_{0}/N_{\psi})}\sum_{n_{1}(N_{0}')^{*}}E_{A,(n_{1}N_{0}+bN_{\psi}N_{0}'+aN_{0}')\beta}.
		\]
		We have the formula 
		\[
		\sum_{n(N)^{*}}f(n) = \sum_{d \mid N}\mu(d)\sum_{m(N/d)}f(dm)
		\]
		for any natural number $N$ and any function $f$ on $\Z/N\Z$. Thus we obtain
		\begin{align*}
		E_{A,\beta,\chi} &= \psi(N_{0}')\sum_{a(N_{\psi})^{*}}\psi(a)\sum_{b (N_{0}/N_{\psi})}\sum_{d \mid N_{0}'}\mu(d)\sum_{n(N_{0}'/d)}E_{A,(ndN_{0}+bN_{\psi}N_{0}'+aN_{0}')\beta}.
		\end{align*}
		We write
		\[
		(ndN_{0} + bN_{\psi}N_{0}' + aN_{0}')\beta = (nN_{0}/N_{\psi} + bN_{0}'/d)dN_{\psi}\beta + aN_{0}'\beta.
		\]
		If $n$ runs mod $N_{0}'/d$ and $b$ mod $N_{0}/N_{\psi}$, then $nN_{0}/N_{\psi} + bN_{0}'/d$ runs mod $N_{\beta}/dN_{\psi}$. Thus we get
		\[
		E_{A,\beta,\chi} = \psi(N_{0}')\sum_{d \mid N_{0}'}\mu(d)\sum_{a(N_{\psi})^{*}}\psi(a)\sum_{m(N_{\beta}/dN_{\psi})}E_{A,mdN_{\psi}\beta+aN_{0}'\beta}.
		\]
		Now let $H_{d}$ be the subgroup of $A$ generated by $dN_{\psi}\beta$. It is an isotropic subgroup of order $N_{\beta}/dN_{\psi}$. Further, $aN_{0}'\beta \in H_{d}^{\perp}$ is isotropic and has order $N_{\psi}$ in $B_{d} = H_{d}^{\perp}/H_{d}$. Using Lemma \ref{lemma lifting} we find
		\[
		\sum_{m(N_{\beta}/dN_{\psi})}E_{A,mdN_{\psi}\beta+aN_{0}'\beta} = E_{B_{d},aN_{0}'\beta}\uparrow_{B_{d}}^{A}.
		\]
		This gives the formula in Proposition \ref{proposition oldforms}.
		
		Since $Q(\beta) \equiv 0 \pmod \Z$, we see that the lattice $L_{d}$ generated by $L$ and $dN_{\psi}\beta$ is again even. Its dual lattice is given by
		\[
		L_{d}' = \{x \in L' : (x,dN_{\psi}\beta) \in \Z\}.
		\]
		It is clear that $L_{d}/L \cong H_{d}$ and $L_{d}'/L \cong H_{d}^{\perp}$, which implies
		\[
		B_{d} = H_{d}^{\perp}/H_{d} \cong (L_{d}'/L)/(L_{d}/L) \cong L_{d}'/L_{d}.
		\]
		This finishes the proof of Proposition \ref{proposition oldforms}.
		
		\subsection{Proof of Theorem \ref{theorem fourier expansion}}\label{section proof theorem fourier expansion}
		
		First, a standard computation yields the following Fourier expansion of the individual Eisenstein series $E_{A,\beta}$.
		
		\begin{Proposition}[\cite{bruinierhabil}, Theorem 1.6]
		The Eisenstein serie $E_{A,\beta}$ has the Fourier expansion
		\[
		E_{A,\beta}(\tau) = \e_{\beta} + (-1)^{\kappa}\e_{-\beta} + \sum_{\gamma \in A}\sum_{\substack{n \in \Z - Q(\gamma) \\ n > 0}}c_{\beta}(\gamma,n)e(n\tau)\e_{\gamma}
		\]
		with
		\begin{align*}
		c_{\beta}(\gamma,n) = \frac{(2\pi)^{k}n^{k-1}}{\Gamma(k)}\sum_{c \in \Z \setminus \{0\}}|c|^{1-k}H_{c}^{*}(\beta,0,\gamma,n)
		\end{align*}
		for $n > 0$, where
		\[
		H_{c}^{*}(\beta,m,\gamma,n) = \frac{e^{-\pi i \sgn(c)k/2}}{|c|}\sum_{d(c)^{*} }\rho_{\beta\gamma}\widetilde{\begin{pmatrix}a & b \\ c & d \end{pmatrix}}e\left(\frac{ma+nd}{c} \right)
		\]
		is a Kloosterman sum. Here $a,b \in \Z$ are such that $\widetilde{\left(\begin{smallmatrix}a & b \\ c & d \end{smallmatrix} \right)} = \left(\left(\begin{smallmatrix}a & b \\ c & d \end{smallmatrix}\right),\sqrt{c\tau + d} \right)\in \Mp_{2}(\Z)$, and $\rho_{\beta\gamma}\widetilde{\left(\begin{smallmatrix}a & b \\ c & d \end{smallmatrix}\right)}$ denotes a coefficient of the Weil representation as in \cite{bruinierkuss}, Section~3.
		\end{Proposition}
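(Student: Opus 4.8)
The plan is to carry out the classical Fourier-expansion computation for Eisenstein series, the only genuinely new ingredient being that the automorphy factors live in $\Mp_{2}(\Z)$ and carry the Weil representation $\rho_{A}^{*}$; since $E_{A,\beta}$ converges normally for $k\geq\frac{5}{2}$, every rearrangement below is legitimate. First I would split the sum over $\tilde{\Gamma}_{\infty}\backslash\Mp_{2}(\Z)$ according to the lower-left entry $c$ of $M$. The cosets with $c=0$ are those of $(I,\pm1)$ and of $(-I,\pm i)$: using $\rho_{A}^{*}(I,-1)=(-1)^{2k}\,\mathrm{id}$ (which is the assumption $\kappa\in\Z$ in disguise) the two metaplectic partners in each case give equal summands, the coset of $(I,1)$ contributes $\e_{\beta}$ and the coset of $(-I,i)$ contributes $(-1)^{\kappa}\e_{-\beta}$, so altogether, after the global factor $\tfrac{1}{2}$, these cosets produce the constant term $\e_{\beta}+(-1)^{\kappa}\e_{-\beta}$.

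For the cosets with $c\neq0$ I would use that they are parametrized by $c\in\Z\setminus\{0\}$ and $d\in\Z$ with $\gcd(c,d)=1$, a representative being $\widetilde{M}_{c,d}=\big(\left(\begin{smallmatrix}a&b\\c&d\end{smallmatrix}\right),\sqrt{c\tau+d}\big)$ for any completion $a,b$ of the bottom row (different completions differ by a left factor of $T^{n}$, which does not change the coset), and again the two metaplectic lifts contribute equally. Hence the $c\neq0$ part of $E_{A,\beta}$ equals $\sum_{c\neq0}\sum_{\gcd(c,d)=1}(c\tau+d)^{-k}\rho_{A}^{*}(\widetilde{M}_{c,d})^{-1}\e_{\beta}$, the branch of $(c\tau+d)^{-k}$ being the one prescribed by $\phi$. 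Now fix $c$ and write $d=\delta+nc$ with $\delta$ running over residues modulo $c$ that are coprime to $c$ and $n\in\Z$; choosing representatives so that $\widetilde{M}_{c,\delta+nc}=\widetilde{M}_{c,\delta}T^{n}$, we get $\rho_{A}^{*}(\widetilde{M}_{c,\delta+nc})^{-1}\e_{\beta}=\rho_{A}^{*}(T^{n})^{-1}\rho_{A}^{*}(\widetilde{M}_{c,\delta})^{-1}\e_{\beta}$, and since $\rho_{A}^{*}(T^{n})^{-1}\e_{\mu}=e(nQ(\mu))\e_{\mu}$, the coefficient of $\e_{\gamma}$ in this vector is $e(nQ(\gamma))$ times a fixed coefficient of $\rho_{A}^{*}(\widetilde{M}_{c,\delta})^{-1}$. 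Collecting the $\e_{\gamma}$-component we arrive at $\sum_{c\neq0}c^{-k}\sum_{\delta\,(c)^{*}}\rho_{\beta\gamma}(\widetilde{M}_{c,\delta})\sum_{n\in\Z}e(nQ(\gamma))\,(\tau+\tfrac{\delta}{c}+n)^{-k}$, where $\rho_{\beta\gamma}(\widetilde{M}_{c,\delta})$ is, after the standard reindexing, the Weil coefficient appearing in $H_{c}^{*}$.

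The inner sum over $n$ is then evaluated by the Lipschitz summation formula with an additive character: for $k\geq2$ and $z\in\H$ one has $\sum_{n\in\Z}e(\theta n)(z+n)^{-k}=\frac{(-2\pi i)^{k}}{\Gamma(k)}\sum_{r>0,\ r\equiv-\theta\,(1)}r^{k-1}e(rz)$. Applied with $\theta=Q(\gamma)$ and $z=\tau+\tfrac{\delta}{c}$ this makes the Fourier exponents run over exactly the $n\in\Z-Q(\gamma)$ with $n>0$, each weighted by $n^{k-1}e(n\tau)$, and produces the factor $e(n\delta/c)$; the sum over $\delta$ together with the constant $\tfrac{(-2\pi i)^{k}}{\Gamma(k)}$, the power $c^{-k}$ and a root of unity $e^{-\pi i\sgn(c)k/2}$ coming from the branch of $(c\tau+d)^{-k}$ for $c<0$ combine into exactly $\frac{(2\pi)^{k}n^{k-1}}{\Gamma(k)}\cdot|c|^{1-k}H_{c}^{*}(\beta,0,\gamma,n)$, with no stray power of $i$ left over. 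Summing over $c$ gives the asserted formula for $c_{\beta}(\gamma,n)$.

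The only delicate point is the metaplectic and branch bookkeeping: one must fix a coherent choice of $\sqrt{c\tau+d}$ on each coset, verify that $\widetilde{M}_{c,\delta+nc}=\widetilde{M}_{c,\delta}T^{n}$ and that the metaplectic partners pair up, both as \emph{identities} in $\Mp_{2}(\Z)$ with matching automorphy factors rather than merely up to sign, and keep track of the action of $\rho_{A}^{*}$ on the center of $\Mp_{2}(\Z)$ and on $T$ precisely enough to pin down both the $(-1)^{\kappa}$ in the constant term and the $\sgn(c)$-dependent root of unity from the Lipschitz step, so that it cancels cleanly against the convention built into $H_{c}^{*}$. With the explicit description of $\rho_{A}$ (as in \cite{borcherds}, Section~4) and the Weil-representation Kloosterman sums of \cite{bruinierkuss}, Section~3, this is routine, and the result is precisely Theorem~1.6 of \cite{bruinierhabil}.
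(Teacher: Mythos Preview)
Your outline is correct and is exactly the ``standard computation'' the paper alludes to. In fact the paper does not prove this proposition at all: it merely states it with a citation to \cite{bruinierhabil}, Theorem~1.6, and moves on to use it. So there is nothing to compare beyond noting that your split into $c=0$ versus $c\neq 0$, followed by the Lipschitz summation formula and the bookkeeping of metaplectic phases, is precisely the argument given in \cite{bruinierhabil} (and, in the scalar case, in every textbook treatment of Eisenstein series).
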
 
	
		Multiplying by $\chi(\nu)$ and summing up over $\nu \in (\Z/N_{\beta}\Z)^{*}$, we obtain a first Fourier expansion of $E_{A,\beta,\chi}$. However, it is not very satisfying since the coefficients are given by infinite series involving complicated Kloosterman sums. Using Shintani's formula for the coefficients of the Weil representation (see \cite{shintani}, Proposition~1.6, or \cite{bruinierkuss}, Proposition~1) we further compute
		\begin{align*}
		\sum_{\nu(N_{\beta})^{*}}\chi(\nu)\sum_{c \neq 0}|c|^{1-k}H_{c}^{*}(\nu\beta,0,\gamma,n) = \frac{2i^{-\kappa}}{\sqrt{|A|}}\sum_{c\geq 1 }c^{-k-m/2}G_{\gamma,n}(c;\beta,\chi),
		\end{align*}
		where
		\begin{align*}
		G_{\gamma,n}(c;\beta,\chi) = \sum_{\nu (N_{\beta})^{*}}\chi(\nu) \sum_{d(c)^{*}}\sum_{r \in L/cL}e\left(\frac{aQ(\nu\beta+r)-(\gamma,\nu\beta+r)+d(Q(\gamma)+n)}{c}\right).
		\end{align*}
		The sum $G_{\gamma,n}(c;\beta,\chi)$ is multiplicative in $c$ in the following sense.

		\begin{Lemma}
			Let $c = c_{1}c_{2}$ with $(c_{1},c_{2}) = 1$, and let $N_{1} = \prod_{p \mid c_{1}}p^{\nu_{p}(N_{\beta})}$ and $N_{2} = N_{\beta}/N_{1}$. Write $\chi = \chi_{1}\chi_{2}$ with characters $\chi_{1}$ mod $N_{1}$ and $\chi_{2}$ mod $N_{2}$. Then we have the formula
			\begin{align*}
			G_{\gamma,n}(c;\beta,\chi) = \chi_{1}(N_{2}c_{2})\chi_{2}(N_{1}c_{1})G_{\gamma,n}(c_{1};N_{2}\beta,\chi_{1})G_{\gamma,n}(c_{2};N_{1}\beta,\chi_{2}).
			\end{align*}
		\end{Lemma}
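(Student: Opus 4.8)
The plan is to prove this as a Chinese-Remainder-Theorem factorization of the Gauss-type sum $G_{\gamma,n}(c;\beta,\chi)$, keeping careful track of the unit factors that appear along the way. First I would isolate the structural input. Fixing lifts $\tilde\beta,\tilde\gamma\in L'$ and writing $G_{\gamma,n}(c;\beta,\chi)=\sum_{\nu(N_\beta)^*}\chi(\nu)\,S_c(\nu\beta)$ with $S_c(\xi)=\sum_{d(c)^*}\sum_{r\in L/cL}e\!\left(\tfrac{aQ(\xi+r)-(\tilde\gamma,\xi+r)+d(Q(\gamma)+n)}{c}\right)$ (where $a$ is an inverse of $d$ mod $c$), the elementary observation is that $S_c(\xi)$ is well defined on $L/cL$ and depends on $\xi\in L'$ only through its class in $A$. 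This uses $Q(\tilde\beta)\in\Z$ (valid since $Q(\beta)\equiv 0\pmod\Z$), $(\ell,L')\subset\Z$ for $\ell\in L$, and $Q(\gamma)+n\in\Z$. In particular $S_c(\nu\beta)$ only sees $\nu\bmod N_\beta$, which is what will allow the modulus $c$ of the $r$-sum to be reconciled with the modulus $N_\beta$ of the $\nu$-sum.

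Next I would set up the decompositions for $c=c_1c_2$ with $(c_1,c_2)=1$. Since $N_1=\prod_{p\mid c_1}p^{\nu_p(N_\beta)}$ is exactly the $c_1$-primary part of $N_\beta$, we have $(N_1,N_2)=1$, and CRT gives compatible bijections $(\Z/c\Z)^*\cong(\Z/c_1\Z)^*\times(\Z/c_2\Z)^*$, $L/cL\cong L/c_1L\times L/c_2L$, $(\Z/N_\beta\Z)^*\cong(\Z/N_1\Z)^*\times(\Z/N_2\Z)^*$, together with $\chi=\chi_1\chi_2$ and $\chi(\nu)=\chi_1(\nu_1)\chi_2(\nu_2)$. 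I would also record the primary splitting $\beta=\beta^{(1)}+\beta^{(2)}$ in $A$, where $\beta^{(1)}$ has order $N_1$ and lies in $\langle N_2\beta\rangle$, $\beta^{(2)}$ has order $N_2$ and lies in $\langle N_1\beta\rangle$, and $(\beta^{(1)},\beta^{(2)})\in\Z$.

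Inserting $r=c_2\bar c_2 r_1+c_1\bar c_1 r_2$ (with $c_i\bar c_i\equiv 1$ modulo the complementary modulus) and the partial-fraction identity $\tfrac1c\equiv\tfrac{\bar c_2}{c_1}+\tfrac{\bar c_1}{c_2}\pmod1$ into the exponential and expanding the quadratic form, every genuine cross term acquires a factor $c_1c_2$ and drops out mod $1$, leaving a $c_1$-exponential built from $(\nu,d_1,r_1)$ times a $c_2$-exponential built from $(\nu,d_2,r_2)$. Here one uses the invariance from the first step, together with the primary splitting of $\beta$, to replace $\nu\beta$ inside the $c_1$-exponential by $\nu_1'\,(N_2\beta)$ for a unit $\nu_1'$ congruent to $\nu_1$ up to a factor built from $c_2$ and $N_2$ — the $\beta^{(2)}$-contribution involves only $\nu_2$ and is carried entirely by the $c_2$-exponential — and symmetrically for $c_2$. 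After the accompanying reindexings of $d_1$ and $r_1$, the $c_1$-exponential together with its $\nu_1$-sum becomes $G_{\gamma,n}(c_1;N_2\beta,\chi_1)$, the reindexing of $\nu_1$ contributing the scalar $\chi_1(N_2c_2)$; symmetrically the $c_2$-part gives $\chi_2(N_1c_1)\,G_{\gamma,n}(c_2;N_1\beta,\chi_2)$. Summing over the now-independent pairs of variables yields the asserted identity.

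The hard part is exactly this reconciliation: the terms $aQ(\nu\tilde\beta+r)$ and $(\tilde\gamma,\nu\tilde\beta)$ couple $\nu$, which runs modulo $N_\beta$, to $r$ and $d$, which run modulo $c$, and since $N_\beta$ and $c$ are a priori unrelated these couplings cannot be separated by a naive CRT. One must exploit that $S_c$ depends only on the class in $A$, together with the primary decomposition $\beta=\beta^{(1)}+\beta^{(2)}$, to pass legitimately from $\nu\beta$ to $\nu_1'(N_2\beta)$ and $\nu_2'(N_1\beta)$ in the two factors. Once that is arranged, what remains is bookkeeping: chasing the units $\bar c_i,\bar N_i$ and the factors $c_i,N_i$ so that the overall constant comes out exactly as $\chi_1(N_2c_2)\chi_2(N_1c_1)$. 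This is routine but must be done with care, since it is easy to land on a complex-conjugate or otherwise twisted version of the correct scalar.
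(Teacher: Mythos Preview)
Your CRT plan is in the right spirit and close to the paper's argument, but the paper's execution is considerably slicker and avoids a subtlety that your outline does not fully resolve. The paper does not use the partial-fraction identity or a primary decomposition of $\beta$ at all. Instead it chooses the \emph{coupled} parametrizations
\[
\nu = N_2c_2\,\nu_1 + N_1c_1\,\nu_2,\qquad d = c_2d_1+c_1d_2,\qquad a = \bar c_2 a_1 + \bar c_1 a_2,\qquad r = c_2 r_1 + c_1 r_2,
\]
with $\nu_i\in(\Z/N_i\Z)^*$, $d_i\in(\Z/c_i\Z)^*$, $a_id_i\equiv 1\ (c_i)$, $r_i\in L/c_iL$. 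The point of building the factors $c_1,c_2$ into the parametrization of $\nu$ is that one then has the exact identity
\[
\nu\tilde\beta + r \;=\; c_2\big(N_2\nu_1\tilde\beta + r_1\big) \;+\; c_1\big(N_1\nu_2\tilde\beta + r_2\big)
\]
in $L'$, so that $Q(\nu\tilde\beta+r)$, $(\gamma,\nu\tilde\beta+r)$ and $d(Q(\gamma)+n)$ all split as a $c_1$-multiple plus a $c_2$-multiple. Dividing by $c=c_1c_2$ gives the desired two fractions immediately, and $\chi(\nu)=\chi_1(N_2c_2\nu_1)\chi_2(N_1c_1\nu_2)$ produces the prefactor $\chi_1(N_2c_2)\chi_2(N_1c_1)$ in one step.

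The gap in your version is the partial-fraction step. The identity $\tfrac{1}{c}\equiv\tfrac{\bar c_2}{c_1}+\tfrac{\bar c_1}{c_2}\pmod 1$ requires an integer numerator, but $-(\tilde\gamma,\nu\tilde\beta)$ is only in $\tfrac{1}{N_\beta}\Z$. Your proposed fix --- replacing $\nu\beta$ by $\nu_1'(N_2\beta)$ in the $c_1$-exponential via the primary splitting --- does not go through as stated: if you try to absorb $\nu_2\beta^{(2)}$ into the $r_1$-sum using that its order $N_2$ is prime to $c_1$, the $(\gamma,\cdot)$ term leaves a residual contribution in $\tfrac{1}{N_2}\Z$ that depends on $\nu_2$. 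One can eventually push this through by further shifts in $r_2$ and more bookkeeping, but it is genuinely delicate, not ``routine''. The paper's coupled parametrization of $\nu$ makes the whole issue disappear.
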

	
		\begin{proof}
			We split $\nu = N_{2}c_{2}\nu_{1} + N_{1}c_{1}\nu_{2}$ with $\nu_{i} \in (\Z/N_{i}\Z)^{*}$ and $d = c_{2}d_{1} + c_{1}d_{2}$ with $d_{i} \in (\Z/c_{i}\Z)^{*}$. Then we can take $a = \overline{c}_{2}a_{1} + \overline{c}_{1}a_{2}$ with $a_{i} \in \Z$ such that $a_{i}d_{i} \equiv 1 \pmod {c_{i}}$ and $\overline{c}_{i} \in \Z$ such that $c_{i}\overline{c}_{i} \equiv 1 \pmod {c/c_{i}}$ and $\overline{c}_{i} \equiv 0 \pmod{c_{i}}$. Further, we write $r = c_{2}r_{1} + c_{1}r_{2}$ with $r_{i} \in L/c_{i}L$. If we plug this into the definition of $G(c;\beta,\chi)$ and use $\chi(N_{2}c_{2}\nu_{1} + N_{1}c_{1}\nu_{2}) = \chi_{1}(N_{2}c_{2}\nu_{1})\chi_{2}(N_{1}c_{1}\nu_{2})$, we obtain the stated formula.
		\end{proof}
		
		By the above lemma we have
		\begin{align}
		\begin{split}\label{eq splitting}
		&\sum_{c \geq 1 }c^{-k-m/2}G_{\gamma,n}(c;\beta,\chi) = \chi_{N_{g}}(N_{g}')\chi_{N_{g}'}(N_{g})\sum_{\substack{c \geq 1 \\ (c,g) = 1}}\chi_{N_{g}}(c)c^{-k-m/2}G_{\gamma,n}\left(c;N_{g}\beta,\chi_{N_{g}'}\right) \\
		&\qquad  \times \prod_{p \mid g}\left(\sum_{\alpha = 0}^{\infty}\chi_{p}'(p^{\nu_{p}(N_{g})}p^{\alpha})p^{-\alpha(k+m/2)}G_{\gamma,n}\left(p^{\alpha},\frac{N_{\beta}}{p^{\nu_{p}(N_{\beta})}}\beta,\chi_{p}\right)\right).
		\end{split}
		\end{align}
		Recall that here $\chi = \chi_{N_{g}}\chi_{N_{g}'}$, and $\chi_{N_{g}} = \chi_{p}\chi_{p}'$ where $\chi_{p}$ has conductor $p^{\nu_{p}(N_{g})}$ and $\chi_{p}'$ has conductor $\frac{N_{g}}{p^{\nu_{p}(N_{g})}}$. Note that $N_{g}\beta$ has order $N_{g}'$ in $A$ which matches the conductor of $\chi_{N_{g}'}$. Similarly, the order of $\frac{N_{\beta}}{p^{\nu_{p}(N_{\beta})}}\beta$ in $A$ equals $p^{\nu_{p}(N_{\beta})}$, which is the conductor of $\chi_{p}$. Next, we compute $G_{\gamma,n}(c;\beta,\chi)$ in the relevant cases.
	
		\begin{Proposition}\label{proposition main technical proposition}
			Let $\beta \in A$ with $Q(\beta) = 0 \pmod \Z$, let $N_{\beta}$ be the order of $\beta$ in $A$, and let $\chi$ be a primitive Dirichlet character mod $N_{\beta}$. Let $g = (N_{\beta},N_{\beta}(\gamma,\beta))$.
			\begin{enumerate}
				\item Suppose that $(c,g) = 1$. Then we have the formula
				\begin{align*}
				G_{\gamma,n}(c;\beta,\chi) = \chi(c)\overline{\chi}(-N_{\beta}(\gamma,\beta))G(\chi)c^{m}\sum_{a \mid c}\mu(c/a)a^{1-m}N_{\gamma,n}(a).
				\end{align*}
				\item Let $p$ be a prime dividing $g$, and suppose that $N_{\beta} = p^{\nu_{p}(N_{\beta})}$. Then we have $G_{\gamma,n}(p^{\alpha};\beta,\chi) = 0$ for $0 \leq \alpha < \nu_{p}(g)$, and
				\begin{align*}
				&G_{\gamma,n}(p^{\alpha};\beta,\chi) = \chi(-1)G(\chi)p^{\alpha-\nu_{p}(N_{\beta})} \!\!\!\!\!\!\!\!\!\!\sum_{\substack{u\left(p^{\nu_{p}(N_{\beta})}\right)^{*} \\ up^{\alpha} \equiv p^{\nu_{p}(N_{\beta})}(\gamma,\beta)\ \left(p^{\nu_{p}(N_{\beta})}\right)}}\!\!\!\!\!\!\!\!\!\!\overline{\chi}(u) \\
			& \qquad\qquad  \times \sum_{\nu \left(p^{\nu_{p}(N_{\beta})}\right)}\left(N_{\gamma-\nu\beta,n+\frac{p^{\alpha}\nu u}{p^{\nu_{p}(N_{\beta})}}}(p^{\alpha})-p^{m-1}N_{\gamma-\nu\beta,n+\frac{p^{\alpha}\nu u}{p^{\nu_{p}(N_{\beta})}}}(p^{\alpha-1})\right)
				\end{align*}
				for $\alpha \geq \nu_{p}(g)$.
			\end{enumerate}
		\end{Proposition}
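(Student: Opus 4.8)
The plan is to compute $G_{\gamma,n}(c;\beta,\chi)$ by adapting the evaluation of the corresponding sum for $\beta=0$ in \cite{bruinierkuss}, Section~4, the new inputs being the Dirichlet character $\chi$ and the shift of the quadratic form by $\nu\beta$. Fix a representative $\tilde\beta\in L'$ of $\beta$; then $Q(\tilde\beta)\in\Z$ (isotropy) and $N_{\beta}(\gamma,\tilde\beta)$ is a well-defined integer modulo $N_{\beta}$ (since $N_{\beta}\tilde\beta\in L$). The first step common to both parts is to substitute $r\mapsto dr$ in the inner sum over $r\in L/cL$ (a bijection because $(c,d)=1$), to use $ad\equiv 1\pmod c$ together with the integrality of $(\tilde\beta,r)$, $(\gamma,r)$, $Q(r)$ for $r\in L$, and to complete the square against $\gamma$; this rewrites the exponent so that the $d$-dependence sits in a factor $e\bigl(d(Q(r-\gamma)+n)/c\bigr)$ with integral argument, the remaining $r$-dependence in $e\bigl(\nu(\tilde\beta,r)/c\bigr)$, and the residual $\nu$-dependence in the genuinely quadratic diagonal term $e\bigl(aQ(\nu\tilde\beta)/c-\nu(\gamma,\tilde\beta)/c\bigr)$. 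It is exactly this last term, absent when $\beta=0$, that obstructs a direct application of \cite{bruinierkuss}, and averaging over $\nu$ against $\chi$ is what controls it.

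For part (1) the assumption $(c,g)=1$ means that $N_{\beta}(\gamma,\beta)$ is a unit modulo every prime power $p^{\nu_{p}(N_{\beta})}$ with $p\mid c$, which is exactly what lets the sum over $\nu$ separate off: it reduces to a Gauss sum of the shape $\sum_{\nu(N_{\beta})^{*}}\chi(\nu)e(h\nu/N_{\beta})$ with $h\equiv-\overline{c}\,N_{\beta}(\gamma,\beta)\pmod{N_{\beta}}$, and the standard identity $\sum_{\nu(N)^{*}}\chi(\nu)e(h\nu/N)=\overline{\chi}(h)G(\chi)$ for primitive $\chi$ produces the asserted prefactor $\chi(c)\,\overline{\chi}(-N_{\beta}(\gamma,\beta))\,G(\chi)$; in particular $G_{\gamma,n}(c;\beta,\chi)=0$ whenever $g>1$, since then $N_{\beta}(\gamma,\beta)$ is not invertible modulo $N_{\beta}$. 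The factor left over is the untwisted sum of \cite{bruinierkuss}: the $d$-sum is a Ramanujan sum, $\sum_{d(c)^{*}}e\bigl(d(Q(r-\gamma)+n)/c\bigr)=\sum_{a\mid(c,\,Q(r-\gamma)+n)}a\,\mu(c/a)$, and Möbius inversion together with $\#\{r\in L/cL:a\mid Q(r-\gamma)+n\}=(c/a)^{m}N_{\gamma,n}(a)$ collapses it to $c^{m}\sum_{a\mid c}\mu(c/a)a^{1-m}N_{\gamma,n}(a)$.

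For part (2) we take $c=p^{\alpha}$ and $N_{\beta}=p^{\nu_{p}(N_{\beta})}$ with $p\mid g$, so the sum over $\nu$ no longer decouples. Instead one groups the residues $d\bmod p^{\alpha}$ by the class $u$ of their inverse $a$ modulo $p^{\nu_{p}(N_{\beta})}$; carrying this out, the class $u$ is forced to satisfy $up^{\alpha}\equiv N_{\beta}(\gamma,\beta)\pmod{N_{\beta}}$. Comparing $p$-adic valuations, this congruence has no solution in units $u$ when $\alpha<\nu_{p}(g)$, which gives the asserted vanishing $G_{\gamma,n}(p^{\alpha};\beta,\chi)=0$; for $\alpha\ge\nu_{p}(g)$ the sum over $d$ again produces a Ramanujan sum $c_{p^{\alpha}}(\cdot)=p^{\alpha}\mathbf{1}[p^{\alpha}\mid\cdot]-p^{\alpha-1}\mathbf{1}[p^{\alpha-1}\mid\cdot]$, so that, after recognising $\#\{r\in L/p^{\alpha}L:p^{\alpha}\mid Q(\nu\tilde\beta+r-\gamma)+n'\}=N_{\gamma-\nu\beta,n'}(p^{\alpha})$ and the analogous count modulo $p^{\alpha-1}$, the sum over $r$ turns into $N_{\gamma-\nu\beta,\,n+p^{\alpha}\nu u/p^{\nu_{p}(N_{\beta})}}(p^{\alpha})-p^{m-1}N_{\gamma-\nu\beta,\,n+p^{\alpha}\nu u/p^{\nu_{p}(N_{\beta})}}(p^{\alpha-1})$. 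The reorganisation of the remaining sums over the original $\nu$ and over $u$ then introduces the new index $\nu\bmod p^{\nu_{p}(N_{\beta})}$, the character factor $\overline{\chi}(u)$, and, via a short Gauss sum, the prefactor $\chi(-1)G(\chi)p^{\alpha-\nu_{p}(N_{\beta})}$.

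The step I expect to be the main obstacle is the bookkeeping in part (2): disentangling the three sums (over $\nu\bmod N_{\beta}$, over $d\bmod p^{\alpha}$, and over $r\in L/p^{\alpha}L$) so that the $r$-sum is honestly a representation number $N_{\gamma-\nu\beta,n'}(p^{\alpha})$, verifying that $n'=n+p^{\alpha}\nu u/p^{\nu_{p}(N_{\beta})}$ indeed lies in $\Z-Q(\gamma-\nu\beta)$ exactly when the congruence on $u$ holds, and tracking all powers of $p$ and the various conductors so that the prefactor comes out precisely as stated. The parity of $m$ plays no role at this stage; it enters only later, in the passage from these representation-number sums to Dirichlet $L$-functions in Theorem~\ref{theorem fourier expansion}.
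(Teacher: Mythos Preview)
Your overall strategy---reduce to Ramanujan sums and representation numbers as in \cite{bruinierkuss}, with the new twist coming from the $\nu$-sum---is the right one, and part~(2) is organized along the correct lines. But there is a genuine gap in part~(1), and it is precisely at the point you flag as ``what lets the sum over $\nu$ separate off.''

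After your substitution $r\mapsto dr$, the $\nu$-dependent part of the exponential is
\[
e\!\left(\frac{a\nu^{2}Q(\tilde\beta)}{c}\right)\,
e\!\left(\frac{\nu(\tilde\beta,r)}{c}\right)\,
e\!\left(-\frac{\nu(\gamma,\tilde\beta)}{c}\right),
\]
which you yourself describe as ``genuinely quadratic'' and as still carrying an $r$-dependent factor. Nevertheless, two lines later you assert that the $\nu$-sum collapses to $\sum_{\nu(N_{\beta})^{*}}\chi(\nu)e(h\nu/N_{\beta})$ with $h\equiv -\overline{c}\,N_{\beta}(\gamma,\beta)$. This does not follow: the quadratic term $a\nu^{2}Q(\tilde\beta)/c$ depends on $d$ (through $a$) and does not vanish mod~$1$, and the coupling term $\nu(\tilde\beta,r)/c$ depends on $r$. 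Neither disappears under the hypothesis $(c,g)=1$; in fact nothing in your argument yet forces $(c,N_{\beta})=1$, which is needed for $\overline{c}\bmod N_{\beta}$ to make sense and for the vanishing $\chi(c)=0$ when $(c,N_{\beta})>1$.

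The paper handles exactly this obstruction by a different order of operations: \emph{before} touching $r$ or $d$, one Fourier-expands $\chi(\nu)=G(\overline{\chi})^{-1}\sum_{u(N_{\beta})^{*}}\overline{\chi}(u)e(u\nu/N_{\beta})$ (valid for all $\nu$, by primitivity) and simultaneously extends the $\nu$-sum from $(\Z/N_{\beta}\Z)^{*}$ to $\Z/N_{\beta}c\Z$. The resulting $\nu$-sum is then a complete quadratic Gauss sum over $\Z/N_{\beta}c\Z$, whose vanishing condition yields the congruence $uc\equiv N_{\beta}(\gamma,\beta)\pmod{N_{\beta}}$; this is what isolates $u\equiv\overline{c}\,N_{\beta}(\gamma,\beta)$ and forces $(c,N_{\beta})=1$. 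Only after that does one shift $r\mapsto r-\nu N_{\beta}\overline{N}_{\beta}\beta$ (a genuine lattice shift, since $N_{\beta}\beta\in L$) to kill the remaining $\nu$-dependence, and \emph{then} perform $r\mapsto dr$ and the Ramanujan-sum evaluation. The same Fourier-expansion trick is what produces the condition $up^{\alpha}\equiv N_{\beta}(\gamma,\beta)$ in part~(2), rather than any grouping of $d$ by the class of its inverse. Your proposal is missing this key step; without it the $\nu$-sum does not decouple, and the argument as written does not go through.
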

		
		\begin{proof}
		Suppose that $(c,g) = 1$. We first write
		\begin{align*}
		G_{\gamma,n}(c;\beta,\chi) &= \frac{1}{c}\sum_{\nu (N_{\beta}c)}\chi(\nu) \sum_{d(c)^{*}}\sum_{r \in L/cL}e\left(\frac{aQ(\nu\beta+r)-(\gamma,\nu\beta+r)+d(Q(\gamma)+n)}{c}\right).
		\end{align*}
		Since $\chi$ is primitive, we have
		\[
		\chi(\nu) = \frac{1}{G(\overline{\chi})}\sum_{u(N_{\beta})^{*}}\overline{\chi}(u)e\left(\frac{u\nu}{N_{\beta}}\right), \qquad G(\overline{\chi}) = \sum_{u(N_{\beta})^{*}}\overline{\chi}(u)e\left(\frac{u}{N_{\beta}}\right),
		\]
		for any $\nu$, not necessarily coprime to $N_{\beta}$. We plug this into the above formula for $G_{\gamma,n}(c;\beta,\chi)$ and consider the sum over all terms containing $\nu$,
		\begin{align*}
		\sum_{\nu (N_{\beta}c)}e\left( \frac{\nu^{2}N_{\beta}aQ(\beta)+ \nu(aN_{\beta}(\beta,r)-N_{\beta}(\gamma,\beta)+uc)}{N_{\beta}c}\right).
		\end{align*}
		This is a quadratic Gauss sum, which vanishes unless $uc \equiv N_{\beta}(\gamma,\beta)\pmod {N_{\beta}}$. Since we assume $(c,g) = 1$, this congruence can only be satisfied by some $u \in (\Z/N_{\beta}\Z)^{*}$ if $(c,N_{\beta}) = 1$, that is, $G_{\gamma,n}(c;\beta,\chi) = 0$ if $(c,N_{\beta}) > 1$. In the case $(c,N_{\beta}) = 1$ we have $u \equiv \overline{c}N_{\beta}(\gamma,\beta) \pmod {N_{\beta}}$, where $c\overline{c} \equiv 1 \pmod {N_{\beta}}$, and we obtain
		\begin{align*}
		&G_{\gamma,n}(c;\beta,\chi) = \frac{\chi(c)\overline{\chi}(N_{\beta}(\gamma,\beta))}{G(\overline{\chi})}\sum_{\nu (N_{\beta})}  \\
		&\qquad\times\sum_{d(c)^{*}}\sum_{r \in L/cL}e\left(\frac{aQ(\nu\beta+r)-(\gamma,\nu\beta+r)+d(Q(\gamma)+n)+ c\overline{c}(\gamma,\nu\beta)}{c}\right).
		\end{align*}
		Next, we replace $r$ by $r - \nu N_{\beta}\overline{N}_{\beta} \beta$ where $N_{\beta}\overline{N}_{\beta} \equiv 1 \pmod {c}$, which makes sense since $N_{\beta}\beta \in L$. It is then easy to check that the numerator of the resulting expression in the exponential above is equal to $aQ(r)-(\gamma,r) + d(Q(\gamma) + n)$ modulo $c$. Thus we arrive at
		\begin{align*}
		G_{\gamma,n}(c;\beta,\chi) &= \frac{\chi(c)\overline{\chi}(N_{\beta}(\gamma,\beta))}{G(\overline{\chi})}N_{\beta}\sum_{d(c)^{*}}\sum_{r \in L/cL}e\left(\frac{aQ(r)-(\gamma,r)+d(Q(\gamma)+n)}{c}\right).
		\end{align*}
		As in the proof of \cite{bruinierkuss}, Proposition 3, we can now replace $r$ by $dr$ and use the standard evaluation of the Ramanujan sum $\sum_{c(d)^{*}}e(nd/c) = \sum_{a \mid (n,c)}\mu(c/a)a$ in terms of the Moebius function to obtain
		\begin{align*}
		\sum_{d(c)^{*}}\sum_{r \in L/cL}e\left(\frac{d(Q(r-\gamma)+n)}{c}\right) &= \sum_{r \in L/cL}\sum_{a \mid (c,Q(r-\gamma)+n)}\mu(c/a)a \\
		&= \sum_{a \mid c}\mu(c/a)a(c/a)^{m}N_{\gamma,n}(a).
		\end{align*}
		Finally, we use that $\chi(-1)G(\chi)G(\overline{\chi}) = |G(\chi)|^{2} = N_{\beta}$ since $\chi$ is primitive mod $N_{\beta}$. This yields the stated formula in the case $(c,g) = 1$. Note that the formula also gives the correct result if $c$ is not coprime to $N_{\beta}$ since in this case $\chi(c) = 0$.
		
		Let us now suppose that $c = p^{\alpha}$ for some prime $p$ dividing $g$, and that $N_{\beta} = p^{\nu_{p}(N_{\beta})}$. By the same arguments as above we can write
		\begin{align*}
		&G_{\gamma,n}(p^{\alpha};\beta,\chi) = \frac{1}{G(\overline{\chi})}\sum_{\nu \left(p^{\nu_{p}(N_{\beta})}\right)}\sum_{\substack{u \left(p^{\nu_{p}(N_{\beta})}\right)^{*} \\ up^{\alpha}\equiv p^{\nu_{p}(N_{\beta})}(\gamma,\beta) \, \left(p^{\nu_{p}(N_{\beta})}\right)}}\overline{\chi}(u)  \\
		&\qquad \times\sum_{d(p^{\alpha})^{*}}\sum_{r \in L/p^{\alpha}L}e\left(\frac{aQ(\nu\beta+r)-(\gamma,\nu\beta+r)+d(Q(\gamma)+n)+p^{\alpha-\nu_{p}(N_{\beta})}\nu u }{p^{\alpha}}\right).
		\end{align*} 
		The equation $up^{\alpha} \equiv p^{\nu_{p}(N_{\beta})}(\gamma,\beta) \pmod {p^{\nu_{p}(N_{\beta})}}$ can only be satisfied for some $u \in (\Z/p^{\nu_{p}(N_{\beta})}\Z)^{*}$ if $(p^{\alpha},p^{\nu_{p}(N_{\beta})}) = p^{\nu_{p}(g)}$, which implies $G_{\gamma,n}(p^{\alpha};\beta,\chi) = 0$ for $0 \leq \alpha < p^{\nu_{p}(g)}$. Since $d \in (\Z/p^{\alpha}\Z)^{*}$ is coprime to $p^{\nu_{p}(N_{\beta})}$ we can replace $r$ by $dr$ and $\nu$ by $d \nu$ to rewrite the second line of the last formula above as
		\begin{align*}
		\sum_{d(p^{\alpha})^{*}}\sum_{r \in L/p^{\alpha}L}e\left(\frac{d(Q(\nu\beta+r-\gamma)+n+p^{\alpha-\nu_{p}(N_{\beta})}\nu u) }{p^{\alpha}}\right).
		\end{align*}
		Note that the numerator in the exponential function is an integer under the conditions on $u$. We evaluate the Ramanujan sum as before to obtain the stated formula.		
		\end{proof}

		Now we plug these formula into equation \eqref{eq splitting}. The first line on the right-hand side of equation \eqref{eq splitting} becomes
		\begin{align*}
		\chi_{N_{g}}(N_{g}')\chi_{N_{g}'}(N_{g})\overline{\chi}_{N_{g}'}(-N_{\beta}(\gamma,\beta))G(\chi_{N_{g}'})\sum_{\substack{c \geq 1 \\ (c,g) = 1}}\chi(c)c^{m/2-k}\sum_{a \mid c}\mu(c/a)a^{1-m}N_{\gamma,n}(a).
		\end{align*}
		The series over $c$ can be computed in the same way as it was done for $\chi = 1$ in \cite{bruinierkuss}, Section~4, or \cite{bruinierkuehn}, Section~3. Therefore we leave the details of the computation to the reader. We remark that the most important ingredient is the explicit evaluation of the representation numbers $N_{\gamma,n}(a)$ due to Siegel \cite{siegel} (see also \cite{bruinierkuss}, Theorem~6). We obtain that the series over $c$ equals the main part $c_{\beta,\chi}^{\text{main}}(\gamma,n)$ as stated in Theorem~\ref{theorem fourier expansion}.

		In order to simplify the expression in the second line of equation \eqref{eq splitting}, we use the following lemma, which is due to Siegel (\cite{siegel}, Hilfssatz~13) but is given in a more convenient form in \cite{bruinierkuss}, Lemma~5.
		
		\begin{Lemma}[\cite{bruinierkuss}, Lemma~5]
			Let $\gamma \in A$ and $n \in \Z - Q(\gamma)$. Let $N_{\gamma}$ be the order of $\gamma$ in $A$ and let $p$ be a prime. For $\alpha > 1 + 2\nu_{p}(2N_{\gamma}n)$ we have
			\[
			N_{\gamma,n}(p^{\alpha+1}) = p^{m-1}N_{\gamma,n}(p^{\alpha}).
			\]
		\end{Lemma}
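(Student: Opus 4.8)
The plan is to localise at the prime $p$ and to prove the identity by a Hensel-type lifting of solutions. Since $N_{\gamma,n}(a)$ is multiplicative and $N_{\gamma,n}(p^{\alpha})$ depends only on the completion $L_{p}=L\otimes\mathbb{Z}_{p}$, with $\gamma$ regarded as an element of the dual $L_{p}'$, I would study, for $r\in L_{p}$ with $f(r):=Q(r-\gamma)+n\equiv0\pmod{p^{\alpha}}$, how many of the $p^{m}$ lifts $r+p^{\alpha}t$ with $t\in L_{p}/pL_{p}$ are again solutions modulo $p^{\alpha+1}$. Since
\[
f(r+p^{\alpha}t)=f(r)+p^{\alpha}(t,r-\gamma)+p^{2\alpha}Q(t)\equiv f(r)+p^{\alpha}(t,r-\gamma)\pmod{p^{\alpha+1}}\qquad(\alpha\geq1),
\]
the number of admissible lifts is governed by the reduction modulo $p$ of the linear form $t\mapsto(t,r-\gamma)$ on $L_{p}/pL_{p}$: if it is non-zero --- for odd $p$ this means $r-\gamma\notin pL_{p}'$; call such an $r$ \emph{good} --- then exactly $p^{m-1}$ lifts are again solutions, irrespective of the value of $f(r)$, and every lift of a good solution is again good. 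Hence the good solutions contribute, for every $\alpha\geq1$, exactly $p^{m-1}$ times as much to $N_{\gamma,n}(p^{\alpha+1})$ as to $N_{\gamma,n}(p^{\alpha})$, and it only remains to handle the \emph{bad} solutions.

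For a bad $r$ one has $f(r+p^{\alpha}t)\equiv f(r)\pmod{p^{\alpha+1}}$, so the condition ``$\nu_{p}(f(r))>\alpha$'' is well defined on bad solutions modulo $p^{\alpha}$; let $A_{>\alpha}$ and $A_{=\alpha}$ denote the numbers of bad solutions modulo $p^{\alpha}$ with $\nu_{p}(f(r))>\alpha$ and with $\nu_{p}(f(r))=\alpha$ respectively. Comparing the lift counts of good and bad solutions, the identity $N_{\gamma,n}(p^{\alpha+1})=p^{m-1}N_{\gamma,n}(p^{\alpha})$ is equivalent to $A_{=\alpha}=(p-1)A_{>\alpha}$. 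The basic estimate I would use is that, since $r-\gamma$ has the same class as $-\gamma$ in $L_{p}'/L_{p}$, its order there is $p^{\nu_{p}(N_{\gamma})}$, so $p^{\nu_{p}(N_{\gamma})}(r-\gamma)\in L_{p}$ and hence $\nu_{p}(2Q(r-\gamma))=\nu_{p}((r-\gamma,r-\gamma))\geq\nu_{p}(2)-2\nu_{p}(N_{\gamma})$; in particular $Q(r-\gamma)$ is never too small $p$-adically, and as soon as $\alpha>\nu_{p}(n)$ the two summands of $f(r)$ must cancel, which forces $\nu_{p}(Q(r-\gamma))=\nu_{p}(n)$ for every solution.

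To establish $A_{=\alpha}=(p-1)A_{>\alpha}$ I would descend. Writing a bad $r$ as $r-\gamma=p^{c}w$ with $w\in L_{p}'$ of content $0$, and rescaling the relevant quadratic module so that $w$ becomes a primitive vector of a genuine $\mathbb{Z}_{p}$-lattice, one sees that the bad solutions modulo $p^{\alpha}$ are in bijection with the solutions modulo a smaller power of $p$ of a counting problem of the same shape, whose constant term has strictly smaller $p$-adic valuation. By induction that smaller count is eventually multiplied by $p^{m-1}$ at each step, and unwinding this yields $A_{=\alpha}=(p-1)A_{>\alpha}$, provided $\alpha$ lies past the unstable range both of the original problem and of all those occurring in the descent; keeping track of how $\nu_{p}(n)$ and $\nu_{p}(N_{\gamma})$ change shows that $\alpha>1+2\nu_{p}(2N_{\gamma}n)$ is sufficient. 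This gives $N_{\gamma,n}(p^{\alpha+1})=p^{m-1}N_{\gamma,n}(p^{\alpha})$ for $\alpha>1+2\nu_{p}(2N_{\gamma}n)$, equivalently the statement that $\sum_{\nu\geq0}N_{\gamma,n}(p^{\nu})X^{\nu}$ is a rational function with denominator $1-p^{m-1}X$ and numerator of degree at most $w_{p}$ --- the numerator being precisely $L_{\gamma,n}^{(p)}$.

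I expect the main obstacle to be making the descent rigorous and extracting the \emph{sharp} bound $1+2\nu_{p}(2N_{\gamma}n)$, rather than a cruder one involving $\det L$: the descent does not stay inside the class of even lattices, so it must be carried out for arbitrary $\mathbb{Z}_{p}$-quadratic modules equipped with a fractional coset and a constant term, keeping simultaneous track of the scaling data of the module and of the valuation of the constant term, and this is where the $\mathbb{Z}_{p}$-Jordan decomposition of $L_{p}$ and the elementary theory of $p$-adic quadratic forms enter in an essential way. The case $p=2$ needs the usual additional care, because the bilinear form there always carries a factor $2$, so ``good modulo $p$'' is too weak a notion and one has to expand $f$ to second order and argue modulo $4$ --- which is also why the threshold involves $\nu_{p}(2)$. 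Rather than redo this bookkeeping from scratch, for this final step I would follow Siegel's Hilfssatz~13 in the streamlined form given in \cite{bruinierkuss}, Lemma~5.
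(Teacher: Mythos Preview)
The paper gives no proof of this lemma at all: it is simply quoted from \cite{bruinierkuss}, Lemma~5 (ultimately Siegel's Hilfssatz~13), and used as a black box to truncate the local series. Your proposal ends by invoking exactly the same reference, so at the level of what the paper actually does there is nothing to compare --- your citation already matches.

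The Hensel-lifting sketch you give on top of that is the right heuristic and the expansion $f(r+p^{\alpha}t)\equiv f(r)+p^{\alpha}(t,r-\gamma)\pmod{p^{\alpha+1}}$ is correct, but the ``descent'' step handling the bad solutions (where $r-\gamma\in pL_{p}'$) is not made precise: you do not specify the rescaled problem, do not verify that the induction hypothesis applies to it, and do not actually carry out the bookkeeping that would yield the sharp threshold $1+2\nu_{p}(2N_{\gamma}n)$ rather than something cruder. You yourself flag this and then defer to Siegel/Bruinier--Kuss for ``this final step'' --- but that reference is the entire lemma, not just a final step, so as written your argument is circular. Since the paper is content to cite the result, this is not a defect for the purposes of the paper; if you wanted a self-contained proof you would have to actually execute the descent (or, more directly, appeal to the explicit evaluation of $N_{\gamma,n}(p^{\alpha})$ in terms of local densities as in \cite{bruinierkuss}, Theorem~6, from which the stabilisation is immediate).
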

		
		The lemma implies that $G_{\gamma,n}(p^{\alpha};\beta,\chi)$ for $p \mid g$ and $N_{\beta} = p^{\nu_{p}(N_{\beta})}$ vanishes for $\alpha > w_{p}$, so the series in the second line of \eqref{eq splitting} is actually a finite sum.
		
		Finally, we use the multiplicativity of Gauss sums to rewrite
		\[
		\chi_{N_{g}}(N_{g}')\chi_{N_{g}'}(N_{g})G(\chi_{N_{g}'})\prod_{p \mid g}\chi_{p}'(p^{\nu_{p}(N_{g})})G(\chi_{p}) = G(\chi).
		\]
		Taking everything together, we obtain the formula given in Theorem \ref{theorem fourier expansion}.
		
	\subsection{Proof of Corollary \ref{corollary}}\label{section proof corollary}
	
	It is clear that the finite part and the finite products occuring in the main part of the Fourier coefficients of $E_{A,\beta,\chi}$ lie in $\Q(\chi)$. Hence we need to investigate the $L$-factors appearing in the main part more closely. Recall that, if a character $\xi$ modulo $f$ is induced from a character $\xi_{0}$ modulo $f_{0}$, then their Dirichlet $L$-functions are related by
	\[
	L(\xi,s) = L(\xi_{0},s)\prod_{p \mid f}(1-\xi_{0}(p)p^{-s}),
	\]
	and $L(\xi_{0},s)$ satsfies the functional equation
	\[
	L(\xi_{0},s) = \frac{f_{0}^{-s}\Gamma(\frac{1-s-\delta}{2})G(\xi_{0})}{\pi^{1/2-s}\Gamma(\frac{s+\delta}{2})i^{\delta}}L(\overline{\xi}_{0},1-s).
	\]
	where $\delta = 0$ if $\xi_{0}$ is even and $\delta = 1$ if $\xi_{0}$ is odd. Further, if $s$ is a positive integer with $s \equiv \delta \pmod 2$ we have the evaluation
	\[
	L(\overline{\xi}_{0},1-s) = -\frac{f_{0}^{s-1}}{s}\sum_{n(f_{0})}\overline{\xi}_{0}(n)B_{s}(n/f_{0})
	\]
	with the usual Bernoulli polynomials $B_{s}(x) \in \Q[x]$ (see \cite{zagier}, Chapter~I.7). If we apply this to $s = k$ and the characters $\chi\chi_{D_{0}}$ and $\chi^{2}$, we see that all powers of $\pi$ and all gamma factors appearing in the Fourier coefficients of $E_{A,\beta,\chi}$ combine to rational numbers. Next, we treat the occuring Gauss sums. Let us assume for simplicity that $\chi$ and $\chi_{D_{0}}$ have coprime conductors, and that $\chi$ and $\chi^{2}$ are primitive. The general case is similar, but more technical. Under these conditions, we have the well-known relations
	\[
	G(\chi\chi_{D_{0}}) = \chi(|D_{0}|)\chi_{D_{0}}(N_{\beta})G(\chi)G(\chi_{D_{0}}), \quad G(\chi^{2}) = \frac{G(\chi)^{2}}{J(\chi,\chi)},
	\]
	where $J(\chi,\chi) = \sum_{a (N_{\beta})} \chi(a)\chi(1-a) \in \Q(\chi)$ is a Jacobi sum, and the evaluation 
	\[
	G(\chi_{D_{0}}) = i^{a}\sqrt{|D_{0}|},
	\]
	where $a = 0$ if $D_{0} > 0$ and $a = 1$ if $D_{0} < 0$. Recall that $\sgn(D_{0}) = (-1)^{\kappa+k}$ if the rank of $L$ is even and $\sgn(D_{0}) = (-1)^{\kappa+k+1/2}$ if the rank of $L$ is odd. All occuring Gauss sums $G(\chi)$ cancel out, $\sqrt{|D_{0}|}$ combines with $\sqrt{|A|} = \sqrt{|\det(L)|}$ (and $(2n)^{k}$ if the rank is odd) to a rational number, and the sum of all powers at $i$ is even. Hence the coefficients $c_{\beta,\chi}(\gamma,n)$ of $E_{A,\beta,\chi}$ lie in $\Q(\chi)$. Further, from the resulting expression it is easy to see that $\sigma(c_{\beta,\chi}(\gamma,n)) = c_{\beta,\sigma\circ \chi}(\gamma,n)$ for $\sigma \in \Gal(\Q(\zeta_{\varphi(N_{\beta})})/\Q)$. The Galois group of $\Q(\zeta_{\varphi(N_{\beta})})$ only permutes the summands in $c_{\beta}(\gamma,n)=\sum_{\chi}c_{\beta,\chi}(\gamma,n)$. Therefore, the coefficients of $E_{A,\beta}$ are invariant under this Galois group, hence rational. 
	 
	\subsection{Proof of Proposition \ref{proposition hecke}}\label{section proof proposition hecke}
	
	Let $\langle \cdot,\cdot \rangle$ denote the usual Petersson inner product on modular forms of weight $k$ for $\rho_{A}^{*}$. If $m$ is even, then the Hecke operator $T_{r}(p)$ satisfies
	\[
	\langle f|T_{r}(p),g \rangle = \langle f,g|T_{r}(p)\rangle
	\]
	for all holomorphic modular forms $f,g \in M_{k,\rho_{A}^{*}}$, if the inner products exist. If $m$ is odd, the analogous relation holds to $T(p^{2})$. Since the Hecke operators also map cusp forms to cusp forms, we see that they preserve orthogonality to cusp forms.
	
	By \cite{bruinierstein}, Theorem~4.2 and Theorem~4.10, the Hecke operatos $T_{r}(p)$ (if $m$ is even) or $T(p^{2})$ (if $m$ is odd) map a vector valued modular form
	\[
	f = \sum_{\gamma \in A}\sum_{\substack{n \in \Z-Q(\gamma) \\ n \geq 0}}c(\gamma,n)e(n\tau)\e_{\gamma}
	\]
	in $M_{k,\rho_{A}^{*}}$ to the modular form
	\[
	\sum_{\gamma \in A}\sum_{\substack{n \in \Z-Q(\gamma) \\ n \geq 0}}b(\gamma,n)e(n\tau)\e_{\gamma},
	\]
	where
	\begin{align*}
	b(\gamma,n) &= \begin{dcases}
		c(r\gamma,pn) + p^{k-1}c(\gamma/r,n/p), & \text{if $m$ is even,} \\
		c(p\gamma,p^{2}n) + \epsilon_{p}^{\left(\frac{-1}{|A|}\right)-\sig(A)}\left( \frac{p}{|A|2^{-\sig(A)}}\right)p^{k-3/2}\left( \frac{-n}{p}\right)c(\gamma,n) \\
		+ p^{2k-2}c(\gamma/p,n/p^{2}), & \text{if $m$ is odd.}
		\end{dcases}	\end{align*}
		Here $\sig(A) = b^{+} -b^{-} \pmod 8$ is the signature of $A$, $\epsilon_{p}$ equals $1$ or $i$ according to whether $p$ is equivalent to $1$ or $-1$ modulo $4$ (the level of $L$ is automatically divisible by $4$ if the rank $m$ of $L$ is odd, hence $p$ is odd if $m$ is odd), and we understand that $c(\gamma/r,n/p) = 0$ if $p \nmid n$ and $c(\gamma/p,n/p^{2}) = 0$ if $p^{2} \nmid n$. Since $p$ is coprime to $N$, we can choose $\overline{p} \in \Z$ such that $p\overline{p} \equiv 1 \pmod N$, and then $\gamma = p\overline{p}\gamma$ in $A$, so $\gamma/p$ makes sense. Note the we have to apply the results of \cite{bruinierstein} to the lattice $(L,-Q)$ since we work with the dual Weil representation $\rho_{A}^{*}$, which amounts to changing $\sig(A)$ to $-\sig(A)$ in their formulas.
	
	\begin{Lemma}\label{lemma hecke}
		Let $\beta \in A$ with $Q(\beta) = 0 \pmod \Z$. Let $p$ be a prime which is coprime to the level of $L$. Then the Hecke operator $T_{r}(p)$ (if $m$ is even) or $T(p^{2})$ (if $m$ is odd) maps $E_{A,\beta}$ to
		\begin{align*}
		\begin{dcases}
		E_{A,\beta/r}+p^{k-1}E_{A,r\beta} , & \text{if $m$ is even,} \\
		E_{A,\beta/p}+p^{2k-2}E_{A,p\beta}, & \text{if $m$ is odd.}
		\end{dcases}
		\end{align*}
	\end{Lemma}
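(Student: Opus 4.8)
The plan is to exploit the characterization of the Eisenstein series already used in the proof of Lemma~\ref{lemma lifting}: $E_{A,\beta}$ is the unique element of $M_{k,\rho_A^*}$ with constant term $\e_\beta + (-1)^\kappa\e_{-\beta}$ that is orthogonal to cusp forms. More precisely, let $E_{k,\rho_A^*}$ denote the orthogonal complement of the space $S_{k,\rho_A^*}$ of cusp forms in $M_{k,\rho_A^*}$. Since a holomorphic modular form whose constant term vanishes is a cusp form, the linear map sending $f \in E_{k,\rho_A^*}$ to its constant term $\sum_\gamma c(\gamma,0)\e_\gamma$ is injective (its kernel lies in $E_{k,\rho_A^*}\cap S_{k,\rho_A^*} = 0$); recall also that $E_{k,\rho_A^*}$ is spanned by the $E_{A,\beta}$ with $\beta$ isotropic.

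First I would note that, by \cite{bruinierstein}, Theorem~4.2 and Theorem~4.10, the Hecke operator $T_r(p)$ (for $m$ even) resp.\ $T(p^2)$ (for $m$ odd) maps $M_{k,\rho_A^*}$ into itself and cusp forms into cusp forms, and is self-adjoint for the Petersson product; hence it preserves $E_{k,\rho_A^*}$. In particular $E_{A,\beta}|T_r(p)$ resp.\ $E_{A,\beta}|T(p^2)$ lies in $E_{k,\rho_A^*}$, and so does the claimed right-hand side, being a linear combination of Eisenstein series. By the injectivity above it therefore suffices to check that the two sides have the same constant term.

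The constant term is read off from the Fourier-coefficient formula recalled before Lemma~\ref{lemma hecke}. The constant term of $E_{A,\beta}$ is $c(\gamma,0) = \delta_{\gamma,\beta} + (-1)^\kappa\delta_{\gamma,-\beta}$. In the even case this gives $b(\gamma,0) = c(r\gamma,0) + p^{k-1}c(\gamma/r,0)$, and since multiplication by $r$ (being coprime to $N$) is an automorphism of $A$, the function $\gamma \mapsto c(r\gamma,0)$ is exactly the constant term of $E_{A,\beta/r}$, while $\gamma \mapsto c(\gamma/r,0)$ is the constant term of $E_{A,r\beta}$; hence $b(\gamma,0)$ is the constant term of $E_{A,\beta/r} + p^{k-1}E_{A,r\beta}$. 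In the odd case the middle summand in the formula for $b(\gamma,n)$ carries the factor $\left(\frac{-n}{p}\right)$, which vanishes at $n = 0$, so $b(\gamma,0) = c(p\gamma,0) + p^{2k-2}c(\gamma/p,0)$, and the same reasoning (now using that $p$ acts invertibly on $A$, so $\gamma/p$ makes sense) identifies this with the constant term of $E_{A,\beta/p} + p^{2k-2}E_{A,p\beta}$.

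Comparing constant terms inside $E_{k,\rho_A^*}$ then yields the asserted identities. I do not expect a genuine obstacle: the only point that needs a moment's attention is that the Hecke operator really does preserve the Eisenstein subspace, which follows from self-adjointness together with the fact that it maps modular forms to modular forms and cusp forms to cusp forms; everything else is bookkeeping with the explicit coefficient formula of \cite{bruinierstein}.
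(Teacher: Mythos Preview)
Your proposal is correct and follows essentially the same approach as the paper's proof: both invoke the characterization of $E_{A,\beta}$ as the unique element of $M_{k,\rho_A^*}$ with constant term $\e_\beta + (-1)^\kappa\e_{-\beta}$ that is orthogonal to cusp forms, observe that the Hecke operator preserves orthogonality to cusp forms, and then compare constant terms using the explicit coefficient formulas from \cite{bruinierstein}. Your write-up simply spells out in more detail the verification the paper calls ``easy to verify''.
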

	
	\begin{proof}
		The Eisenstein series $E_{A,\beta}$ is the unique holomorphic modular form of weight $k$ for $\rho_{A}^{*}$ which has constant term $\e_{\beta} + (-1)^{\kappa}\e_{-\beta}$ and which is orthogonal to cusp forms with respect to the Petersson inner product. Since the Hecke operators preserve orthogonality to cusp forms, it suffices to check that the relation stated in the lemma on the constant coefficients. Using the formula for the action of the Hecke operators given above this is easy to verify.
	\end{proof}
	
	Multiplying by $\chi(\nu)$, summing over $\nu \pmod {N_{\beta}}$, and replacing $\nu$ by $p\nu$ or $\overline{p}\nu$ at the appropriate places (note that $N_{\beta}$ divides $N$, so $p$ is coprime to $N_{\beta}$), we obtain the formulas stated in Proposition~\ref{proposition hecke}.

\bibliography{references}
\bibliographystyle{alpha}

\end{document}